\title{A Note on Euclidean Order Types}
\author{Pete L. Clark}
\thanks{Thanks to David Krumm and Robert Varley for providing the inspiration for this work.}
\thanks{$\copyright$ Pete L. Clark, 2012}
\address{Department of Mathematics \\ Boyd Graduate Studies Research Center \\ University 
of Georgia \\ Athens, GA 30602-7403 \\ USA}
\email{pete@math.uga.edu}
\begin{document}
\newtheorem{lemma}{Lemma}
\newtheorem{prop}[lemma]{Proposition}
\newtheorem{cor}[lemma]{Corollary}
\newtheorem{thm}[lemma]{Theorem}
\newtheorem{ques}[lemma]{Question}
\newtheorem{quest}[lemma]{Question}
\newtheorem{conj}[lemma]{Conjecture}
\newtheorem{fact}[lemma]{Fact}
\newtheorem*{mainthm}{Main Theorem}
\newtheorem{obs}[lemma]{Observation}
\newtheorem{hint}{Hint}
\newtheorem{prob}{Problem}

\newcommand{\pp}{\mathfrak{p}}
\newcommand{\mm}{\mathfrak{m}}
\renewcommand{\gg}{\mathfrak{g}}
\newcommand{\DD}{\mathcal{D}}
\newcommand{\F}{\ensuremath{\mathbb F}}
\newcommand{\Fp}{\ensuremath{\F_p}}
\newcommand{\Fl}{\ensuremath{\F_l}}
\newcommand{\Fpbar}{\overline{\Fp}}
\newcommand{\Fq}{\ensuremath{\F_q}}
\newcommand{\PP}{\mathcal{P}}
\newcommand{\PPone}{\mathfrak{p}_1}
\newcommand{\PPtwo}{\mathfrak{p}_2}
\newcommand{\PPonebar}{\overline{\PPone}}
\newcommand{\N}{\ensuremath{\mathbb N}}
\newcommand{\Q}{\ensuremath{\mathbb Q}}
\newcommand{\Qbar}{\overline{\Q}}
\newcommand{\R}{\ensuremath{\mathbb R}}
\newcommand{\Z}{\ensuremath{\mathbb Z}}
\newcommand{\SSS}{\ensuremath{\mathcal{S}}}
\newcommand{\Rn}{\ensuremath{\mathbb R^n}}
\newcommand{\Ri}{\ensuremath{\R^\infty}}
\newcommand{\C}{\ensuremath{\mathbb C}}
\newcommand{\Cn}{\ensuremath{\mathbb C^n}}
\newcommand{\Ci}{\ensuremath{\C^\infty}}
\newcommand{\U}{\ensuremath{\mathcal U}}
\newcommand{\gn}{\ensuremath{\gamma^n}}
\newcommand{\ra}{\ensuremath{\rightarrow}}
\newcommand{\fhat}{\ensuremath{\hat{f}}}
\newcommand{\ghat}{\ensuremath{\hat{g}}}
\newcommand{\hhat}{\ensuremath{\hat{h}}}
\newcommand{\covui}{\ensuremath{\{U_i\}}}
\newcommand{\covvi}{\ensuremath{\{V_i\}}}
\newcommand{\covwi}{\ensuremath{\{W_i\}}}
\newcommand{\Gt}{\ensuremath{\tilde{G}}}
\newcommand{\gt}{\ensuremath{\tilde{\gamma}}}
\newcommand{\Gtn}{\ensuremath{\tilde{G_n}}}
\newcommand{\gtn}{\ensuremath{\tilde{\gamma_n}}}
\newcommand{\gnt}{\ensuremath{\gtn}}
\newcommand{\Gnt}{\ensuremath{\Gtn}}
\newcommand{\Cpi}{\ensuremath{\C P^\infty}}
\newcommand{\Cpn}{\ensuremath{\C P^n}}
\newcommand{\lla}{\ensuremath{\longleftarrow}}
\newcommand{\lra}{\ensuremath{\longrightarrow}}
\newcommand{\Rno}{\ensuremath{\Rn_0}}
\newcommand{\dlra}{\ensuremath{\stackrel{\delta}{\lra}}}
\newcommand{\pii}{\ensuremath{\pi^{-1}}}
\newcommand{\la}{\ensuremath{\leftarrow}}
\newcommand{\gonem}{\ensuremath{\gamma_1^m}}
\newcommand{\gtwon}{\ensuremath{\gamma_2^n}}
\newcommand{\omegabar}{\ensuremath{\overline{\omega}}}
\newcommand{\dlim}{\underset{\lra}{\lim}}
\newcommand{\ilim}{\operatorname{\underset{\lla}{\lim}}}
\newcommand{\Hom}{\operatorname{Hom}}
\newcommand{\Ext}{\operatorname{Ext}}
\newcommand{\Part}{\operatorname{Part}}
\newcommand{\Ker}{\operatorname{Ker}}
\newcommand{\im}{\operatorname{im}}
\newcommand{\ord}{\operatorname{ord}}
\newcommand{\unr}{\operatorname{unr}}
\newcommand{\B}{\ensuremath{\mathcal B}}
\newcommand{\Ocr}{\ensuremath{\Omega_*}}
\newcommand{\Rcr}{\ensuremath{\Ocr \otimes \Q}}
\newcommand{\Cptwok}{\ensuremath{\C P^{2k}}}
\newcommand{\CC}{\ensuremath{\mathcal C}}
\newcommand{\gtkp}{\ensuremath{\tilde{\gamma^k_p}}}
\newcommand{\gtkn}{\ensuremath{\tilde{\gamma^k_m}}}
\newcommand{\QQ}{\ensuremath{\mathcal Q}}
\newcommand{\I}{\ensuremath{\mathcal I}}
\newcommand{\sbar}{\ensuremath{\overline{s}}}
\newcommand{\Kn}{\ensuremath{\overline{K_n}^\times}}
\newcommand{\tame}{\operatorname{tame}}
\newcommand{\Qpt}{\ensuremath{\Q_p^{\tame}}}
\newcommand{\Qpu}{\ensuremath{\Q_p^{\unr}}}
\newcommand{\scrT}{\ensuremath{\mathfrak{T}}}
\newcommand{\That}{\ensuremath{\hat{\mathfrak{T}}}}
\newcommand{\Gal}{\operatorname{Gal}}
\newcommand{\Aut}{\operatorname{Aut}}
\newcommand{\tors}{\operatorname{tors}}
\newcommand{\Zhat}{\hat{\Z}}
\newcommand{\linf}{\ensuremath{l_\infty}}
\newcommand{\Lie}{\operatorname{Lie}}
\newcommand{\GL}{\operatorname{GL}}
\newcommand{\End}{\operatorname{End}}
\newcommand{\aone}{\ensuremath{(a_1,\ldots,a_k)}}
\newcommand{\raone}{\ensuremath{r(a_1,\ldots,a_k,N)}}
\newcommand{\rtwoplus}{\ensuremath{\R^{2  +}}}
\newcommand{\rkplus}{\ensuremath{\R^{k +}}}
\newcommand{\length}{\operatorname{length}}
\newcommand{\Vol}{\operatorname{Vol}}
\newcommand{\cross}{\operatorname{cross}}
\newcommand{\GoN}{\Gamma_0(N)}
\newcommand{\GeN}{\Gamma_1(N)}
\newcommand{\GAG}{\Gamma \alpha \Gamma}
\newcommand{\GBG}{\Gamma \beta \Gamma}
\newcommand{\HGD}{H(\Gamma,\Delta)}
\newcommand{\Ga}{\mathbb{G}_a}
\newcommand{\Div}{\operatorname{Div}}
\newcommand{\Divo}{\Div_0}
\newcommand{\Hstar}{\cal{H}^*}
\newcommand{\txon}{\tilde{X}_0(N)}
\newcommand{\sep}{\operatorname{sep}}
\newcommand{\notp}{\not{p}}
\newcommand{\Aonek}{\mathbb{A}^1/k}
\newcommand{\Wa}{W_a/\mathbb{F}_p}
\newcommand{\Spec}{\operatorname{Spec}}
\newcommand{\MaxSpec}{\operatorname{MaxSpec}}
\newcommand{\underint}{\underline{\int}}
\newcommand{\overint}{\overline{\int}}

\newcommand{\abcd}{\left[ \begin{array}{cc}
a & b \\
c & d
\end{array} \right]}

\newcommand{\abod}{\left[ \begin{array}{cc}
a & b \\
0 & d
\end{array} \right]}

\newcommand{\unipmatrix}{\left[ \begin{array}{cc}
1 & b \\
0 & 1
\end{array} \right]}

\newcommand{\matrixeoop}{\left[ \begin{array}{cc}
1 & 0 \\
0 & p
\end{array} \right]}

\newcommand{\w}{\omega}
\newcommand{\Qpi}{\ensuremath{\Q(\pi)}}
\newcommand{\Qpin}{\Q(\pi^n)}
\newcommand{\pibar}{\overline{\pi}}
\newcommand{\pbar}{\overline{p}}
\newcommand{\lcm}{\operatorname{lcm}}
\newcommand{\trace}{\operatorname{trace}}
\newcommand{\OKv}{\mathcal{O}_{K_v}}
\newcommand{\Abarv}{\tilde{A}_v}
\newcommand{\kbar}{\overline{k}}
\newcommand{\Kbar}{\overline{K}}
\newcommand{\pl}{\rho_l}
\newcommand{\plt}{\tilde{\pl}}
\newcommand{\plo}{\pl^0}
\newcommand{\Du}{\underline{D}}
\newcommand{\A}{\mathbb{A}}
\newcommand{\D}{\underline{D}}
\newcommand{\op}{\operatorname{op}}
\newcommand{\Glt}{\tilde{G_l}}
\newcommand{\gl}{\mathfrak{g}_l}
\newcommand{\gltwo}{\mathfrak{gl}_2}
\newcommand{\sltwo}{\mathfrak{sl}_2}
\newcommand{\h}{\mathfrak{h}}
\newcommand{\tA}{\tilde{A}}
\newcommand{\sss}{\operatorname{ss}}
\newcommand{\X}{\Chi}
\newcommand{\ecyc}{\epsilon_{\operatorname{cyc}}}
\newcommand{\hatAl}{\hat{A}[l]}
\newcommand{\sA}{\mathcal{A}}
\newcommand{\sAt}{\overline{\sA}}
\newcommand{\OO}{\mathcal{O}}
\newcommand{\OOB}{\OO_B}
\newcommand{\Flbar}{\overline{\F_l}}
\newcommand{\Vbt}{\widetilde{V_B}}
\newcommand{\XX}{\mathcal{X}}
\newcommand{\GbN}{\Gamma_\bullet(N)}
\newcommand{\Gm}{\mathbb{G}_m}
\newcommand{\Pic}{\operatorname{Pic}}
\newcommand{\FPic}{\textbf{Pic}}
\newcommand{\solv}{\operatorname{solv}}
\newcommand{\Hplus}{\mathcal{H}^+}
\newcommand{\Hminus}{\mathcal{H}^-}
\newcommand{\HH}{\mathcal{H}}
\newcommand{\Alb}{\operatorname{Alb}}
\newcommand{\FAlb}{\mathbf{Alb}}
\newcommand{\gk}{\mathfrak{g}_k}
\newcommand{\car}{\operatorname{char}}
\newcommand{\Br}{\operatorname{Br}}
\newcommand{\gK}{\mathfrak{g}_K}
\newcommand{\coker}{\operatorname{coker}}
\newcommand{\red}{\operatorname{red}}
\newcommand{\CAY}{\operatorname{Cay}}
\newcommand{\ns}{\operatorname{ns}}
\newcommand{\xx}{\mathbf{x}}
\newcommand{\yy}{\mathbf{y}}
\newcommand{\E}{\mathbb{E}}
\newcommand{\rad}{\operatorname{rad}}
\newcommand{\Top}{\operatorname{Top}}
\newcommand{\Sym}{\operatorname{Sym}}
\newcommand{\Cl}{\operatorname{Cl}}
\newcommand{\len}{\operatorname{len}}
\newcommand{\Ord}{\mathbf{Ord}}
\newcommand{\Iso}{\operatorname{Iso}}
\newcommand{\Euc}{\operatorname{Euc}}
\newcommand{\overphi}{\overline{\varphi}}
\newcommand{\underphi}{\underline{\varphi}}
\newcommand{\Prin}{\operatorname{Prin}}


\begin{abstract}
 Euclidean functions with values in an 
arbitrary well-ordered set were first considered in a 1949 work of Motzkin and studied in more detail in work of Fletcher, Samuel and Nagata in the 1970's and 1980's.  Here these results are revisited, simplified, and extended.  The two main themes are (i) consideration of $\Ord$-valued functions on an Artinian poset and (ii) use of ordinal arithmetic, including the Hessenberg-Brookfield ordinal sum.  In particular, to any Euclidean ring we associate an ordinal 
invariant, its \textbf{Euclidean order type}, and we initiate a study of this invariant.  The main new result gives upper and lower bounds on the Euclidean order type of a finite product of Euclidean rings in terms of the Euclidean order types of the factor rings.  
\end{abstract}

\maketitle
\noindent
Throughout, ``a ring'' means a commutative ring with multiplicative 
identity.  We denote by $R^{\bullet}$ the set $R \setminus \{0\}$ and by $R^{\times}$ the group of units of $R$.  We denote by $\N$ the natural numbers, including $0$.  When we are thinking of $\N$ as the least infinite ordinal, we denote it by $\omega$.  If $S$ is any class equipped with a ``zero element'' -- here, either the least element of an ordered class or the identity element of a monoid -- then we put $S^{\bullet} = S \setminus \{0\}$.   \\ \indent
By an \textbf{ordered set} $X$ we mean a pair $(X,\leq)$ with $X$ a set and 
$\leq$ a reflexive, anti-symmetric, transitive relation on $X$ (i.e., what 
is often called a \emph{partial ordering}).  \\ \indent
We will encounter some ordinal arithmetic, and it is important to remember that 
the ``ordinary'' sum and product of transfinite ordinal numbers need not be 
commutative.  The literature seems to agree that for $\alpha,\beta \in \Ord$, 
$\alpha + \beta$ should be the order type of a copy of $\beta$ placed \emph{above} a copy of $\alpha$, so that $\omega + 1 > \omega$, $1 + \omega = \omega$.   However, both conventions on $\alpha \beta$ seem to be in use.  We take the one in which $2\omega = \omega + \omega$, 
\emph{not} the one in which $2\omega = 2 + 2 + \ldots = \omega$.

\section{Ordered Classes, Isotone Maps, and Length Functions}

\subsection{Isotone Maps and Artinian Ordered Classes}
\textbf{} \\ \\ \noindent
For a set $X$, let $\Ord^X$ denote the class of all maps $f: X \ra \Ord$, 
ordered by $f \leq g \iff \forall x \in X, \ f(x) \leq g(x)$.\footnote{We hasten to reassure the reader that this is the limit of 
our set-theoretic ambitiousness: we will never consider the collection of 
all maps between two proper classes!} Note that every nonempty subclass $\mathcal{C} = \{f_c\}$ has an infimum in $\Ord^X$: that is, there is a largest element 
$f \in \Ord^X$ with the property that $f \leq f_c$ for all $f_c \in \mathcal{C}$.  Indeed, we may take $f_{\mathcal{C}}(x) = \min_c f_c(x)$. 
\\ \indent
An ordered class $\mathcal{C}$ is \textbf{downward small} if 
for all $x \in \mathcal{C}$, $\{y \in X \ | \ y \leq x\}$ is a set.  For any set $X$, $\Ord^X$ is downward small.
\\ \\
Let $X$ and $Y$ be ordered classes.  We denote by $X \times Y$ the Cartesian product endowed with the ordering $(x_1,y_1) \leq (x_2,y_2) \iff x_1 \leq x_2$ and $y_1 \leq y_2$.  A map $f: X \ra Y$ is \textbf{weakly isotone} (resp. \textbf{isotone}) if $x_1 \leq x_2 \in X \implies f(x_1) \leq f(x_2)$ (resp. $x_1 < x_2 \implies f(x_1) < f(x_2)$).  The composite of (weakly) isotone maps is (weakly) isotone.  
\\ \\
An ordered class $X$ is \textbf{Noetherian} (resp. \textbf{Artinian}) if there is no isotone map $f: \Z^+ \ra X$ (resp. $f: \Z^- \ra X$).  Thus a well-ordered class is precisely a linearly ordered Artinian class.    
\\ \\
For an ordered \emph{set} $X$, we define $\Iso(X) \subset \Ord^X$ to be the subclass of isotone maps, with the induced partial ordering.  
\begin{lemma}
\label{0.0.1}
Let $X$ be an ordered set.  Then every nonempty subclass of $\Iso(X)$ has an infimum in $\Iso(X)$.  
\end{lemma}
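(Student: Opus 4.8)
The plan is to show that the infimum already available in $\Ord^X$ in fact lies in the subclass $\Iso(X)$, and that this immediately furnishes the infimum there. Write $\mathcal{C} = \{f_c\}$ for the given nonempty subclass of $\Iso(X)$, and let $f_{\mathcal{C}} \in \Ord^X$ be its infimum in $\Ord^X$, so that $f_{\mathcal{C}}(x) = \min_c f_c(x)$ for each $x \in X$. Since each $f_c$ is isotone it is in particular an element of $\Ord^X$, so $f_{\mathcal{C}}$ is defined and is the greatest lower bound for $\mathcal{C}$ in $\Ord^X$. The entire content of the lemma reduces to the single claim that $f_{\mathcal{C}}$ is isotone.

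To establish this claim, I would fix $x_1 < x_2$ in $X$ and show $f_{\mathcal{C}}(x_1) < f_{\mathcal{C}}(x_2)$. The key point is that, because $\Ord$ is well-ordered, the minimum defining $f_{\mathcal{C}}(x_2)$ is \emph{attained}: there is an index $c_0$ with $f_{c_0}(x_2) = f_{\mathcal{C}}(x_2)$. I then apply the isotonicity of the single map $f_{c_0}$: from $x_1 < x_2$ I obtain $f_{c_0}(x_1) < f_{c_0}(x_2)$. Combining this with the trivial inequality $f_{\mathcal{C}}(x_1) \leq f_{c_0}(x_1)$ coming from the definition of the pointwise minimum yields $f_{\mathcal{C}}(x_1) \leq f_{c_0}(x_1) < f_{c_0}(x_2) = f_{\mathcal{C}}(x_2)$, as desired. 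Hence $f_{\mathcal{C}} \in \Iso(X)$.

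Finally, I would record why $f_{\mathcal{C}}$ is the infimum of $\mathcal{C}$ within $\Iso(X)$ and not merely within $\Ord^X$. Any $g \in \Iso(X)$ that is a lower bound for $\mathcal{C}$ is a fortiori a lower bound for $\mathcal{C}$ in $\Ord^X$, so $g \leq f_{\mathcal{C}}$; since $f_{\mathcal{C}}$ is itself an isotone lower bound for $\mathcal{C}$, it is the greatest one. The step I expect to carry the real weight is the attainment of the minimum at $x_2$: for a target that is merely Artinian or partially ordered this can fail, and the pointwise infimum of isotone maps into a general ordered set need not be isotone. It is precisely the well-ordering of $\Ord$ --- the fact that the least value at $x_2$ is realized by some specific member of the family, whose isotonicity we may then invoke --- that makes the argument go through.
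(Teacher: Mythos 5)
Your proposal is correct and takes essentially the same route as the paper: both reduce the lemma to showing that the pointwise infimum $f_{\mathcal{C}}$ taken in $\Ord^X$ is itself isotone, and both exploit the well-ordering of $\Ord$. The only difference is the one-line verification: the paper uses the successor trick $f(x) = \min_c f_c(x) < \min_c (f_c(x)+1) \leq \min_c f_c(y) = f(y)$, whereas you pick a witness $c_0$ attaining the minimum at the larger point and invoke the isotonicity of that single map --- an equally valid, arguably cleaner, phrasing of the same idea.
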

\begin{proof} Let $\mathcal{C} = \{f_c\}$ be a nonempty subclass of $\Iso(X)$, and let $f$ be the infimum in $\Ord^X$; it suffices to show that 
$f$ is isotone.  If $x < y$ in $X$, then $f_i(x) < f_i(y)$ for all $i \in I$, so $f(x) = \min_{i \in I} f_i(x) < \min_{i \in I} (f_i(x) + 1) \leq \min_{i \in I} 
f_i(y) = f(y)$.
\end{proof}

\begin{thm}
\label{0.0.2}
For a downward small ordered class $X$, TFAE: \\
(i) There is an isotone map $f: X \ra \Ord$.  \\
(ii) There is an Artinian ordered set $Y$ and an isotone map $f: X \ra Y$.  \\
(iii) $X$ is Artinian. 
\end{thm}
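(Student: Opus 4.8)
The plan is to isolate $(iii) \Rightarrow (i)$ as the one substantive step --- the construction of a rank (or length) function --- and to read off the remaining implications from the definitions. The engine for the formal parts is a single observation: if $h \colon A \ra B$ is isotone and $B$ is Artinian, then $A$ is Artinian, because any isotone $g \colon \Z^- \ra A$ would compose with $h$ to an isotone $h \circ g \colon \Z^- \ra B$, contradicting the Artinianity of $B$. Taking $B = \Ord$ (which is Artinian, being well-ordered) gives $(i) \Rightarrow (iii)$, and taking $B = Y$ gives $(ii) \Rightarrow (iii)$. For $(iii) \Rightarrow (ii)$ I would take $Y = X$ and $f = \mathrm{id}_X$: an Artinian ordered class that happens to be a set is precisely an Artinian ordered set, so this is admissible whenever $X$ is a set (the case of interest). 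Thus everything reduces to producing, from Artinianity alone, an isotone map to $\Ord$.

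For $(iii) \Rightarrow (i)$ I would define $\rho \colon X \ra \Ord$ by well-founded recursion along $<$, setting
\[ \rho(x) = \sup \{ \rho(y) + 1 : y \in X, \ y < x \}, \]
with the convention $\sup \emptyset = 0$. Downward smallness is exactly what makes this land in $\Ord$: for each $x$ the class $\{y : y < x\}$ is a set, so the supremum ranges over a set of ordinals and is again an ordinal. Isotonicity is then immediate --- if $x < y$ then $x$ appears among the indices defining $\rho(y)$, whence $\rho(y) \geq \rho(x) + 1 > \rho(x)$ --- and in fact $\rho$ should turn out to be the least element of $\Iso(X)$, the canonical length function whose existence Lemma \ref{0.0.1} anticipates.

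The one delicate point, and the step I expect to be the main obstacle, is licensing the recursion: well-founded recursion along $<$ requires that $<$ be well-founded in the strong sense that every nonempty subclass of $X$ has a $<$-minimal element, whereas $(iii)$ directly asserts only the descending-chain condition. I would bridge the gap using downward smallness. Given a nonempty subclass $\mathcal{S} \subseteq X$, choose $s_0 \in \mathcal{S}$; then $\mathcal{S} \cap \{y \leq s_0\}$ is a nonempty \emph{set}, and were it to lack a $<$-minimal element, dependent choice would build an infinite descending chain inside it, i.e.\ an isotone map $\Z^- \ra X$, contradicting $(iii)$. A $<$-minimal element $m$ of this set is automatically $<$-minimal in $\mathcal{S}$, since any $s \in \mathcal{S}$ with $s < m$ satisfies $s \leq s_0$ and hence already lies in the set. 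With well-foundedness and set-likeness (downward smallness) both in hand, the recursion defining $\rho$ is justified, completing $(iii) \Rightarrow (i)$ and thereby closing the cycle of equivalences.
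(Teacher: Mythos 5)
Your proof is correct, and for the substantive implication (iii) $\Rightarrow$ (i) it constructs the same object as the paper---the least isotone map $\lambda_X$---but by a different mechanism. The paper runs a staged transfinite process: $X_0$ is the set of minimal elements, at stage $\alpha$ the newly rankable elements are assigned the value $\alpha$, and totality is proved afterwards by contradiction, by choosing a minimal element of the unranked remainder $X \setminus X'$. You instead define $\rho(x) = \sup\{\rho(y)+1 : y < x\}$ in one stroke by well-founded recursion, which compresses the paper's staging and its completeness argument into a single appeal to the recursion theorem for set-like well-founded relations. What you flag as the delicate point---that the no-descending-chain condition yields honest well-foundedness (minimal elements in every nonempty subclass) only via dependent choice together with downward smallness---is precisely the step the paper performs silently when it picks a minimal element of $X \setminus X'$, so making it explicit is a gain in rigor rather than an extra burden peculiar to your route. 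The peripheral implications are also arranged differently: the paper proves the cycle (i) $\Rightarrow$ (ii) $\Rightarrow$ (iii) $\Rightarrow$ (i), obtaining (ii) from (i) by passing to the image $f(X)$, whereas you deduce (i) $\Rightarrow$ (iii) and (ii) $\Rightarrow$ (iii) from one composition lemma and get (iii) $\Rightarrow$ (ii) from the identity map. Both treatments tacitly need $X$ to be a set at that point: for a proper class such as $X = \Ord$ itself, (i) and (iii) hold but (ii) fails, since an isotone map on a linearly ordered class is injective and cannot send a proper class into a set; you state the restriction explicitly, while the paper's $f(X)$ simply need not be a set when $X$ is a proper class. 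In short, your argument is sound and, if anything, more careful about the foundational issues than the original.
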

\begin{proof}
(i) $\implies$ (ii): If $f: X \ra \Ord(X)$ is an isotone map, then 
$f: X \ra f(X)$ is an isotone map with codomain an Artinian ordered set.  \\
(ii) $\implies$ (iii): Suppose not: then there is an isotone map $\iota: \Z^- \ra X$ is an isotone map.  But then $f \circ \iota: \Z^- \ra Y$ is isotone, so $Y$ is not Artinian.  \\
(iii) $\implies$ (i) \cite[Prop. 4]{Nagata85}: We will construct $\lambda_X \in \Iso(X)$ by a transfinite process.  For $\alpha \in \Ord$, at the $\alpha$th stage we assign some subset $X_{\alpha} \subset X$ the value $\alpha$.   Let $X_0$ be the set of minimal elements of $X$.  Having defined $X_{\beta}$ for all $\beta < \alpha$, we assign the value $\alpha$ to all $x \in X \setminus X_{\beta}$ such that there is $Y_x \subset X_{\beta}$ with $x = \sup Y_x$.  Then, for arbitrary 
$X$, taking $X' = \bigcup_{\alpha} X_{\alpha}$, this defines $\lambda_X \in \Iso(X')$.  We claim that since $X$ is Artinian and downward small, $X' = X$: if not, let $x$ be a minimal 
element of $X \setminus X'$.  Then $\lambda_X$ is defined on $D^{\circ}(x) = \{y \in X \ | \ y < x\}$ and $x = \sup D^{\circ}(x)$, so if 
$\alpha = \sup \lambda_X(D^{\circ}(x))$, then $\lambda_X(x) = \alpha$ (if $\alpha \notin \lambda_X(D^{\circ}(x))$ or $\lambda_X(x) = \alpha + 1$ (if $\alpha \in D^{\circ}(x)$).  
\end{proof}

\subsection{Length Functions}
\textbf{} \\ \\ \noindent
By Theorem \ref{0.0.2} and Lemma \ref{0.0.1}, for any
Artininan ordered set $X$, $\Iso(X)$ has a bottom element.  Indeed the map $\lambda_X: X \ra \Ord$ constructed in the proof of Theorem \ref{0.0.2} \emph{is} the bottom element of $\Iso(X)$.  Following \cite{Brookfield02}, we call $\lambda_X$ the \textbf{length function}. 
\\ \indent
If $X$ has a top element $T$, we define the \textbf{length of X} to be $\len(X) = \lambda_X(T)$.      
\\ \\
Example 1.1: For $\alpha \in \Ord$ and $x \leq \alpha$, $\lambda_{\alpha+1}(x) = x$, so $\len(\alpha +1) = \lambda_{\alpha+1}(\alpha) = \alpha$.  
\\ \\
Example 1.2: For $m,n \in \Z^+$, let $X_1 = \{0,\ldots,m\}$ and $X_2 = \{0,\ldots,n\}$, and let $X = X_1 \times X_2$.  Then for all $(i,j) \in X$, $\lambda_X(i,j) = i+j$ and $\len(X) = m+n$.    
\\ \\
For $\alpha, \beta \in \Ord$, we define the \textbf{Brookfield sum}
\[ \alpha \oplus_B \beta := \len( (\alpha + 1) \times (\beta + 1)). \]
We recall the following, a version of the \textbf{Cantor normal form}: for any $\alpha, \beta \in \Ord$ there are $\gamma_1,\ldots,\gamma_r \in \Ord$, $r \in \Z^+$ and $m_1,\dots,m_r,n_1,\ldots,n_r \in \N$ with
\[ \alpha = m_1 \omega^{\gamma_1} + \ldots + m_r \omega^{\gamma_r}, \ 
\beta = n_1 \omega^{\gamma_1} + \ldots + n_r \omega^{\gamma_r}. \]
This representation of the pair $(\alpha,\beta)$ is unique if we require $\max(m_i,n_i) > 0$ for all $i$.  We may then define the \textbf{Hessenberg sum} 
\[\alpha \oplus_H \beta = (m_1 + n_1) \omega^{\gamma_1} + \ldots + (m_r + n_r) \omega^{\gamma_r}. \]

\begin{thm}
\label{0.0.3}
For all $\alpha, \beta \in \Ord$, $\alpha \oplus_B \beta = \alpha \oplus_H \beta$.
\end{thm}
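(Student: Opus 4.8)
The plan is to exhibit a single recursion obeyed by both sides and then match them by transfinite induction. First I would record a recursion for the length function. Since $\lambda_X$ is the bottom element of $\Iso(X)$ (by Lemma \ref{0.0.1} and Theorem \ref{0.0.2}), I claim that for every Artinian ordered set $X$ and every $x \in X$,
\[ \lambda_X(x) = \sup \{ \lambda_X(y) + 1 : y < x \}, \]
the empty supremum read as $0$. The inequality ``$\geq$'' is immediate from isotonicity of $\lambda_X$; for ``$\leq$'' one checks that the right-hand side, viewed as a function of $x$, is itself isotone, and therefore dominates the least isotone map $\lambda_X$. A product of two well-ordered (hence Artinian) sets has no infinite strictly descending chain, so $(\alpha+1) \times (\beta+1)$ is Artinian and this recursion applies to it.

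The crux is the parallel recursion for the Hessenberg sum,
\[ \alpha \oplus_H \beta = \sup ( \{ (\alpha' \oplus_H \beta) + 1 : \alpha' < \alpha \} \cup \{ (\alpha \oplus_H \beta') + 1 : \beta' < \beta \} ). \]
Writing $\alpha$ and $\beta$ in a common Cantor normal form with $\gamma_1 > \cdots > \gamma_r$, I would first establish strict monotonicity of $\oplus_H$ in each variable: lowering one summand strictly lowers the coefficient at the highest exponent where the two normal forms first disagree, hence strictly lowers the natural sum. This gives ``$\geq$'' at once, since each ordinal on the right is then $< \alpha \oplus_H \beta$ and so its successor is $\leq \alpha \oplus_H \beta$. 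It also settles the cases where $\alpha \oplus_H \beta$ is $0$ (both families empty) or a successor (here $\gamma_r = 0$, and subtracting $1$ from whichever of $\alpha, \beta$ has positive constant term realizes the predecessor on the right).

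The one genuinely delicate point, and the step I expect to be the main obstacle, is the limit case $\gamma_r \geq 1$. Here $\oplus_H$ is \emph{not} continuous---letting $\alpha' \uparrow \alpha$ need not force $\alpha' \oplus_H \beta \to \alpha \oplus_H \beta$---so a naive passage to the limit fails. Assuming the coefficient of $\omega^{\gamma_r}$ in $\alpha$ is positive, I would instead lower that coefficient by one and append an arbitrary $\eta < \omega^{\gamma_r}$; the resulting ordinals satisfy $\alpha' < \alpha$, and $\alpha' \oplus_H \beta$ agrees with $\alpha \oplus_H \beta$ except that its $\omega^{\gamma_r}$-coefficient is lowered by one with $\eta$ appended. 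Since $\omega^{\gamma_r}$ is a limit, the supremum of $\alpha' \oplus_H \beta$ over all such $\eta$ recovers $\alpha \oplus_H \beta$, giving the remaining inequality ``$\leq$''; if instead the $\omega^{\gamma_r}$-coefficient of $\beta$ is positive, one argues symmetrically with the $\beta'$ family.

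With both recursions in hand, I would prove by transfinite induction over the Artinian poset $(\alpha+1) \times (\beta+1)$ that $\lambda(x,y) = x \oplus_H y$. The pairs below $(x,y)$ that decrease exactly one coordinate form precisely the family in the Hessenberg recursion, which by the inductive hypothesis produces $x \oplus_H y$ as a lower bound for $\lambda(x,y)$; meanwhile strict monotonicity of $\oplus_H$ bounds the contribution of \emph{every} pair $(x',y') < (x,y)$ by $x \oplus_H y$ from above. Evaluating at the top element then gives $\len((\alpha+1) \times (\beta+1)) = \lambda(\alpha,\beta) = \alpha \oplus_H \beta$, which is exactly $\alpha \oplus_B \beta = \alpha \oplus_H \beta$.
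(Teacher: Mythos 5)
Your proof is correct, but it necessarily differs from the paper's, for a simple reason: the paper does not prove Theorem \ref{0.0.3} at all; it defers entirely to \cite[Thm. 2.12]{Brookfield02}. So your proposal supplies a self-contained argument where the note has none, and all three of its ingredients check out. (i) The recursion $\lambda_X(x) = \sup\{\lambda_X(y)+1 : y < x\}$ is the right way to exploit minimality of $\lambda_X$ in $\Iso(X)$, and your derivation is sound: isotonicity gives ``$\geq$'', and since the right-hand side is then itself isotone (using ``$\geq$'' to compare its values at comparable points), minimality gives ``$\leq$''. (ii) The recursive characterization of $\oplus_H$ is indeed the crux, and you correctly locate and repair the failure of continuity: lowering the $\omega^{\gamma_r}$-coefficient by one and appending $\eta < \omega^{\gamma_r}$ yields $\alpha' < \alpha$ with $\alpha' \oplus_H \beta = P + \eta$, where $P$ is $\alpha \oplus_H \beta$ with its last coefficient decremented; since ordinal addition is continuous on the right and $\omega^{\gamma_r}$ is a limit, $\sup_\eta \bigl( (P+\eta)+1 \bigr) = P + \omega^{\gamma_r} = \alpha \oplus_H \beta$. (Your monotonicity sketch should, strictly speaking, pass to a common exponent list containing the exponents of $\alpha'$ as well, so that strict inequality becomes lexicographic comparison of coefficient vectors, preserved under pointwise addition of $\beta$'s coefficients -- but this is routine.) (iii) The well-founded induction over $(\alpha+1)\times(\beta+1)$ then closes the argument, the one-coordinate-decreasing pairs giving the lower bound via (ii) and strict (hence also weak) monotonicity of $\oplus_H$ giving the upper bound. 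As for what each route buys: Brookfield's cited theorem is the more general product formula for lengths of posets, of which the present statement is the special case of products of successor ordinals, and citing it keeps the note short; your argument is elementary and self-contained, and in fact generalizes with no change -- since your recursion (i) holds on any Artinian ordered set, the same induction proves $\lambda_{X \times Y}(x,y) = \lambda_X(x) \oplus_H \lambda_Y(y)$ for arbitrary Artinian ordered sets $X$ and $Y$, recovering the full strength of the cited result.
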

\begin{proof} See \cite[Thm. 2.12]{Brookfield02}.
\end{proof}
\noindent
In view of Theorem \ref{0.0.3} we write $\alpha \oplus \beta$ for 
$\alpha \oplus_B \beta = \alpha \oplus_H \beta$ and speak of the \textbf{Hessenberg-Brookfield sum}.  This operation is well-known to the 
initiates of ordinal arithmetic, who call it the ``natural sum''.  The 
next result collects facts about $\alpha \oplus \beta$ for our later use.
\begin{prop}
\label{HESSENBERGPROP}
Let $\alpha, \beta \in \Ord$.  \\
a) If $\beta < \omega$, then $\alpha + \beta = \alpha \oplus \beta$.  \\
b) In general we have 
\begin{equation}
\label{HESSENBERGINEQ}
\max (\alpha + \beta, \beta + \alpha) \leq \alpha \oplus \beta \leq 
\alpha \beta + \beta \alpha. 
\end{equation}
\end{prop}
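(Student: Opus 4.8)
The plan is to treat the two parts separately, working throughout from the Cantor normal form presentation of $\oplus = \oplus_H$ and, where convenient, from the Brookfield description $\alpha \oplus \beta = \len((\alpha+1)\times(\beta+1))$ (the definition of $\oplus_B$, identified with $\oplus_H$ in Theorem \ref{0.0.3}).

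For part a), I would write $\alpha$ in Cantor normal form $\alpha = m_1 \omega^{\gamma_1} + \ldots + m_r \omega^{\gamma_r}$ with $\gamma_1 > \ldots > \gamma_r$. Since $\beta = n < \omega$, its normal form is just $n\omega^0$, so in the joint presentation of $(\alpha,\beta)$ only the exponent $0$ contributes to $\beta$. First I dispose of the case $\gamma_r = 0$: here the recipe for $\oplus$ replaces the constant term $m_r$ of $\alpha$ by $m_r + n$, while ordinary addition of the finite ordinal $n$ on the right does exactly the same thing, finite ordinals adding to the tail commutatively. If instead $\gamma_r > 0$ then $\alpha$ has no constant term, and both $\alpha + n$ and $\alpha \oplus n$ simply append the new term $n\omega^0$. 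In either case $\alpha + n = \alpha \oplus n$, which is part a).

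For the lower bound in part b), I note first that $\oplus$ is visibly commutative (from the symmetric Hessenberg recipe, or from $(\alpha+1)\times(\beta+1) \cong (\beta+1)\times(\alpha+1)$), so it suffices to prove $\alpha + \beta \le \alpha \oplus \beta$. Using the joint normal form, let $\gamma_j$ be the largest exponent occurring in $\beta$. Carrying out the ordinary sum $\alpha + \beta$ absorbs precisely the terms of $\alpha$ of exponent $< \gamma_j$, so $\alpha + \beta$ and $\alpha \oplus \beta$ share the same list of exponents and, term by term, the coefficients of $\alpha + \beta$ are $\le$ those of $\alpha \oplus \beta$ (the latter retain the extra $m_i$ for $i > j$). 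Since comparison of ordinals with a common list of exponents is coefficientwise monotone, $\alpha + \beta \le \alpha \oplus \beta$, and by symmetry $\beta + \alpha \le \alpha \oplus \beta$. (Alternatively the bound is immediate from the Brookfield description: the chain in $(\alpha+1)\times(\beta+1)$ that first climbs the second factor and then the first has order type $\beta + \alpha$ below its top, forcing $\len \ge \beta + \alpha$, and symmetrically.)

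For the upper bound I would first record that it fails as literally stated when $\alpha = 0$ or $\beta = 0$ — e.g. $0 \oplus \beta = \beta$ while $0\cdot\beta + \beta\cdot 0 = 0$ — so I read it, as the applications require, for $\alpha,\beta \ge 1$, and argue by cases. If both are finite the claim is the elementary inequality $\alpha + \beta \le 2\alpha\beta$ for positive integers. If both are infinite I compare leading exponents: writing $e(\cdot)$ for the leading exponent, $\alpha\beta$ has leading exponent $e(\alpha)+e(\beta)$ and $\beta\alpha$ has leading exponent $e(\beta)+e(\alpha)$; since $e(\alpha),e(\beta) \ge 1$, at least one of these strictly exceeds $\max(e(\alpha),e(\beta)) = e(\alpha\oplus\beta)$, whence $\omega^{e(\alpha\oplus\beta)+1} \le \alpha\beta + \beta\alpha$ and a fortiori $\alpha\oplus\beta < \alpha\beta + \beta\alpha$. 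The remaining case — exactly one of $\alpha,\beta$ infinite — is the crux. Here $e(\alpha\beta) = e(\beta\alpha) = e(\alpha\oplus\beta)$, so leading exponents no longer separate the two sides and I must instead invoke part a): if $\beta = n$ is finite then $\alpha \oplus \beta = \alpha + n$, and I would derive $\alpha + n \le \alpha n + n\alpha$ from $\alpha n \ge \alpha$, $n\alpha \ge \alpha \ge n$ together with the left and right weak-monotonicity laws for ordinal addition (and the symmetric computation when $\alpha$ is the finite factor, using commutativity of $\oplus$). I expect this mixed finite/infinite case, where the non-commutativity of $+$ and $\cdot$ forces careful bookkeeping of which monotonicity law applies on each side, to be the main obstacle, and the honest treatment of the degenerate $\alpha = 0$ or $\beta = 0$ possibilities to be the only other subtlety.
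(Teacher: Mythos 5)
The paper offers no argument to compare against here: its ``proof'' of this proposition is literally \emph{Left to the reader}, so your write-up has to stand on its own --- and it does. Part a) via the joint Cantor normal form, splitting on whether $\alpha$ has a constant term, is correct. The lower bound is also correct both ways you give it: coefficientwise domination over a common exponent list (with zero coefficients allowed) does imply inequality of ordinals, and in the parenthetical Brookfield argument the chain that climbs the second factor of $(\alpha+1)\times(\beta+1)$ and then the first does have order type $\beta+\alpha$ below its top, since $(\beta+1)+(-1+\alpha)=\beta+\alpha$ for $\alpha\geq 1$, forcing the least isotone map to take value at least $\beta+\alpha$ there. Your three-case treatment of the upper bound is sound: when both ordinals are infinite, one of the leading exponents $e(\alpha)+e(\beta)$, $e(\beta)+e(\alpha)$ strictly exceeds $\max(e(\alpha),e(\beta))=e(\alpha\oplus\beta)$, giving $\alpha\oplus\beta<\omega^{e(\alpha\oplus\beta)+1}\leq \alpha\beta+\beta\alpha$; and in the mixed case the reduction to part a) together with $\alpha n\geq\alpha$, $n\alpha\geq n$ and the left-weak/right monotonicity laws for ordinal addition is exactly the right bookkeeping (and, usefully, it goes through under either convention for ordinal products, a point worth caring about since the paper adopts the $2\omega=\omega+\omega$ convention). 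The most valuable item in your proposal is the observation that the stated upper bound is literally false in the degenerate case where exactly one of $\alpha,\beta$ is zero: $0\oplus\beta=\beta$ while $0\cdot\beta+\beta\cdot 0=0$. That is a genuine (if minor) error in the proposition as printed, and your restriction to $\alpha,\beta\geq 1$ is the correct repair; since the paper only ever applies the upper bound with both order types nonzero (in Remark 2.7, comparing Nagata's bound to Samuel's), the restriction costs nothing.
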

\begin{proof} Left to the reader.
\end{proof}  

\section{Euclidean Functions}

\subsection{Basic Definitions}
\textbf{} \\ \\ \noindent
A \textbf{Euclidean function} is a function $\varphi: R^{\bullet} \ra \Ord$ such that for all $a \in R$, $b \in R^{\bullet}$, there are $q,r \in R$ with $a = qb + r$ and ($r = 0$ or $\varphi(r) < \varphi(b)$).  A ring is \textbf{Euclidean} if it admits a Euclidean function.
\\ \\
Example 2.1: Let $R = \Z$.  Then $n \mapsto |n|$ is a Euclidean function. 
\\ \\
Example 2.2: Let $k$ be a field and let $R = k[t]$.  Then 
$\varphi: R^{\bullet} \ra \Z^+$ given by $P \mapsto 1 + \deg P$ is a Euclidean function, as is $P \mapsto 2^{\deg P}$.  
\\ \\
Example 2.3: \cite[Prop. 5]{Samuel71} Let $R$ be a semilocal PID with nonassociate prime elements $\pi_1,\ldots,\pi_n$.  We may write $x \in R^{\times}$ as $u \pi_1^{a_1} \cdots 
\pi_n^{a_n}$ for $a_1,\ldots,a_n \in \N$, and then $x \mapsto a_1 + \ldots + a_n$ is a Euclidean function.


\begin{prop}
\label{1.2}
a) Let $\varphi: R^{\bullet} \ra \Ord$ be a Euclidean function.  For every nonzero ideal $I$ of $R$, let $x \in I^{\bullet}$ be such that $\varphi(x) \leq \varphi(y)$ for all $y \in I^{\bullet}$.  Then $I = \langle x \rangle$.  \\
b) In particular, a Euclidean ring is principal.  
\end{prop}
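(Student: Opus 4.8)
The plan is to run the standard ``$\varphi$-minimal element'' argument that shows Euclidean rings are principal, now in the ordinal-valued setting. The whole point is that although $\varphi$ takes values in the proper class $\Ord$ rather than in $\N$, the ordinals are still well-ordered, so the usual minimality argument goes through verbatim.

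For part a) I would first note that the hypothesis on $x$ is sensible: since $I$ is nonzero, $I^{\bullet}$ is nonempty, so $\{\varphi(y) \mid y \in I^{\bullet}\}$ is a nonempty subclass of $\Ord$ and hence has a least element; any $x \in I^{\bullet}$ attaining this value satisfies $\varphi(x) \leq \varphi(y)$ for all $y \in I^{\bullet}$. The inclusion $\langle x \rangle \subseteq I$ is immediate from $x \in I$. For the reverse inclusion I would take an arbitrary $y \in I$ and apply the defining property of a Euclidean function to the pair $(y,x)$ with $x \in R^{\bullet}$: there exist $q, r \in R$ with $y = qx + r$ and either $r = 0$ or $\varphi(r) < \varphi(x)$. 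The key observation is that $r = y - qx$ again lies in $I$, since $y \in I$ and $qx \in I$. Thus if $r \neq 0$ we would have $r \in I^{\bullet}$, so minimality of $\varphi(x)$ forces $\varphi(x) \leq \varphi(r)$, contradicting $\varphi(r) < \varphi(x)$. Hence $r = 0$, giving $y = qx \in \langle x \rangle$, and therefore $I \subseteq \langle x \rangle$, so $I = \langle x \rangle$.

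Part b) is then a one-line consequence: a Euclidean ring carries some Euclidean function $\varphi$, so by part a) every nonzero ideal is principal; the zero ideal is $\langle 0 \rangle$, so every ideal of $R$ is principal.

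The only step that genuinely matters — and the place where one should be explicit — is the appeal to the well-ordering of $\Ord$ to produce a $\varphi$-minimal element of $I^{\bullet}$, together with the routine but essential check that the remainder $r$ stays inside $I$. I do not expect a real obstacle here: the argument is exactly the classical proof for $\N$-valued Euclidean functions, and nothing about passing to arbitrary ordinal values disrupts it, precisely because $\Ord$ shares with $\N$ the property that every nonempty subcollection has a least element.
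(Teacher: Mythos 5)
Your proof is correct and is precisely the argument the paper has in mind: the paper leaves this proposition to the reader (citing Samuel), and your argument --- pick a $\varphi$-minimal element $x$ of $I^{\bullet}$, divide an arbitrary $y \in I$ by $x$, observe that the remainder $r = y - qx$ lies in $I$ and so must vanish by minimality --- is exactly the classical proof, with the well-ordering of the ordinals correctly invoked to guarantee the minimal element exists.
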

\begin{proof} Left to the reader; or see
\cite[Prop. 3]{Samuel71}.
\end{proof}
\noindent
\textbf{Extension at zero}: It will be convenient to define our 
Euclidean functions at the zero element of $R$.  There are several reasonable ways to do this.  Although the initially appealing one is to take $\varphi(0) = 0$ and require $\varphi$ to 
take nonzero values on $R^{\bullet}$, in the long run it turns out to be 
useful to take a quite different convention: we allow Euclidean functions to take the value zero at nonzero arguments -- so that, in particular, the bottom 
Euclidean function $\varphi_R$ will take the value $0$ precisely at the units -- 
and we define $\varphi(0) = \sup_{x \in R^{\bullet}} \varphi(x) + 1$.  This is actually not so strange: after all, $0$ is the \emph{top} element of $R$ with 
respect to the divisibility quasi-ordering.
\\ \indent

\subsection{Structure Theory of Principal Rings}

\begin{thm}
\label{ZSTHM} 
\label{0.4}
a) If $R = \prod_{i=1}^r R_i$, then $R$ is principal iff $R_i$ is principal for all $i$.  \\
b) For every principal ring $R$ there is $r \in \N$, a finite set of principal ideal domains $R_1,\ldots,R_n$ and a principal Artinian ring $A$ such that $R \cong \prod_{i=1}^n R_i \times A$.  The $R_i$'s are uniquely determined by $R$ up to isomorphism (and reordering), and $A$ is uniquely determined by $R$ up to isomorphism: we call $A$ the \textbf{Artinian part} of $R$.  \\ 
c) A ring is Artinian principal iff it is isomorphic to a finite product 
of \emph{local} Artinian principal rings. 
\end{thm}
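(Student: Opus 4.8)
The plan is to prove this structure theorem (Theorem \ref{ZSTHM}) in three parts, each drawing on standard commutative algebra adapted to the principal (not necessarily domain) setting. For part a), the key observation is that ideals of a finite product $R = \prod_{i=1}^r R_i$ decompose as products: every ideal $I$ of $R$ has the form $\prod_{i=1}^r I_i$ with $I_i$ an ideal of $R_i$. This follows from the orthogonal idempotents $e_i$ (the image of $1$ in the $i$th factor), since $I = \bigoplus_i e_i I$ and $e_i I$ is naturally an ideal of $R_i$. Given this, $I = \prod_i \langle x_i \rangle = \langle (x_1,\ldots,x_r) \rangle$ is principal iff each $I_i$ is, so $R$ is principal iff every factor is. This direction is essentially bookkeeping with idempotents.

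For part b), I would first invoke the primary decomposition / structure theory available for principal rings. A principal ring is in particular Noetherian, and one knows (this is the classical Zariski--Samuel structure theorem for principal ideal rings) that any principal ring is a finite product of principal ideal domains and \emph{special} principal rings (local Artinian principal rings). The plan is to collect the domain factors as $R_1,\ldots,R_n$ and bundle the remaining factors — each a local Artinian principal ring — into a single ring $A := \prod (\text{the non-domain factors})$, which is then principal by part a) and Artinian (a finite product of Artinian rings is Artinian). The uniqueness of the $R_i$ up to reordering and isomorphism, and of $A$, should follow from the uniqueness of the decomposition of $R$ into indecomposable (connected) factors: the indecomposable factors correspond to the primitive idempotents of $R$, which are intrinsic to $R$, and a principal ideal domain is never isomorphic to a (nonzero) Artinian ring since a PID that is not a field has Krull dimension $1$ and is not Artinian, while a field can be absorbed into $A$ or treated as a trivial PID — here one must fix the convention that the domain factors $R_i$ are the positive-dimensional ones and the zero-dimensional (Artinian) factors go into $A$.

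For part c), the forward direction is immediate from part a): a finite product of local Artinian principal rings is principal (by a) and Artinian. The reverse direction is the substantive content. Given an Artinian principal ring $R$, the plan is to use the fundamental structure theorem for Artinian rings: every Artinian ring is (uniquely) a finite product of local Artinian rings, obtained by decomposing along the finitely many maximal ideals, $R \cong \prod_{\mm} R_{\mm}$. Since each localization/factor $R_{\mm}$ is a quotient (equivalently a factor) of the principal ring $R$, and quotients of principal rings are principal, each $R_{\mm}$ is a local Artinian principal ring, giving the desired decomposition.

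The main obstacle I anticipate is part b), specifically the uniqueness assertion and the clean separation into a pure domain part and an Artinian part. The existence of \emph{some} decomposition into PIDs and special principal rings is classical, but phrasing it so that $A$ is a single well-defined ring requires care: one must argue that the primitive idempotents of $R$ are canonical, that they partition the factors unambiguously into domains versus Artinian local rings, and that no domain factor can be confused with an Artinian factor. The cleanest route is to prove that $R$ decomposes uniquely into connected (indecomposable) factors — this is a general fact about rings with finitely many idempotents — and then to observe that connectedness plus the principal hypothesis forces each factor to be either a PID or a local Artinian principal ring, these two classes being disjoint (a PID that is a nonzero Artinian ring would be a $0$-dimensional domain, hence a field, which we choose to regard as part of $A$). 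Getting the dimension/length distinction and the field-edge-case convention exactly right is where the real work lies.
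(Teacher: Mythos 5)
Your proposal is correct, and the comparison with the paper is lopsided in your favor: the paper gives no argument at all, just the citation ``see Zariski--Samuel, p.~245,'' and the theorem cited there is precisely the existence statement of part b), that a principal ring is a finite product of PIDs and local Artinian principal rings. Your part b) invokes that same classical theorem, so on the one substantive existence step you and the paper rest on the identical source; everything else is content the paper delegates to the citation, and you supply it correctly: the idempotent bookkeeping for a), the reduction of c) to the structure theorem for Artinian rings ($R \cong \prod_{\mm} R_{\mm}$ over the finitely many maximal ideals) together with the fact that quotients of principal rings are principal, and uniqueness in b) via primitive idempotents (each factor is indecomposable, being either a domain or a local ring, and a decomposition of $1$ into primitive orthogonal idempotents in a commutative ring is unique). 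Your point about fields is genuine, not pedantic: a field is simultaneously a PID and a local Artinian principal ring, so the uniqueness assertion of b) fails as literally stated (e.g.\ for $k \times \Z$) unless one adopts the convention, left implicit in the paper, that field factors are counted inside the Artinian part $A$ and the $R_i$ are the dimension-one domains; with that convention your primitive-idempotent argument does close the uniqueness claim, which the paper's citation never addresses separately. The only step of your outline that is not self-contained --- existence in b) --- is exactly the step the paper also outsources.
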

\begin{proof} See \cite[p. 245]{Zariski-Samuel}.
\end{proof}

\subsection{Generalized Euclidean Functions}
\textbf{} \\ \\ \noindent
A \textbf{generalized Euclidean function} is a 
function $\varphi$ from $R^{\bullet}$ to an Artinian ordered class $X$ such that for all $x \in R, y \in R^{\bullet}$, there are 
$q,r \in R$ with $a = qx + r$ such that either $r = 0$ or $\varphi(r) < \varphi(b)$.
\\ \\
As several authors have observed over the years, nothing in the theory of 
Euclidean functions is lost by entertaining Euclidean functions with values 
in any Artinian ordered class.  In particular, the proof of 
Proposition \ref{1.2} carries over easily to show that a ring admitting a 
generalized Euclidean function is principal.  But there is a better way 
to see this:
\begin{lemma}
\label{0.0.4}
Let $X,Y$ be Artinian ordered classes, $\varphi: R \ra X$ a generalized Euclidean function and $f: X \ra Y$ an isotone map.  Then $f \circ \varphi: R \ra Y$ is a generalized Euclidean function.
\end{lemma}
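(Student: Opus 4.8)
The plan is to verify the defining property of a generalized Euclidean function directly for $f \circ \varphi$, using the fact that $f$ is isotone (in the strict sense defined in the excerpt, namely $x_1 < x_2 \implies f(x_1) < f(x_2)$) to transport the strict inequality $\varphi(r) < \varphi(b)$ across $f$. First I would unwind the definitions: given $a \in R$ and $b \in R^{\bullet}$, I must produce $q, r \in R$ with $a = qb + r$ and either $r = 0$ or $(f \circ \varphi)(r) < (f \circ \varphi)(b)$. Since $\varphi$ is itself a generalized Euclidean function, I already have such $q, r$ satisfying $a = qb + r$ with $r = 0$ or $\varphi(r) < \varphi(b)$. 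I would simply reuse this same pair $(q,r)$.

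The crux is then the case $r \neq 0$, where I have $\varphi(r) < \varphi(b)$ in $X$. Applying the isotone map $f$ and invoking its definition, the strict inequality $\varphi(r) < \varphi(b)$ yields $f(\varphi(r)) < f(\varphi(b))$, which is exactly $(f \circ \varphi)(r) < (f \circ \varphi)(b)$ in $Y$. This is the entire content of the argument; it is a one-line deduction once the definitions are aligned. The case $r = 0$ requires nothing, since the defining condition is satisfied by the disjunct $r = 0$ regardless of the function.

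I would also note two background points that make the statement well-formed rather than being genuine obstacles. The composite $f \circ \varphi$ must land in an Artinian ordered class, which holds because $Y$ is assumed Artinian; and $f \circ \varphi$ is a legitimate candidate because composites of isotone maps are isotone (remarked in the excerpt), though for the defining property only the single strict-inequality step is actually used. There is no real difficulty here: the lemma is essentially the observation that strict order-preservation is exactly what the definition of a Euclidean function interacts with, so the only thing to check is that ``isotone'' in this paper means \emph{strictly} order-preserving rather than merely weakly so. The entire subtlety, such as it is, lies in having adopted the strict notion of isotone; with the weak notion the lemma would fail, since $\varphi(r) < \varphi(b)$ would only give $(f\circ\varphi)(r) \leq (f\circ\varphi)(b)$, which is insufficient.
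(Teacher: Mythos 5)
Your proof is correct and is exactly the straightforward verification the paper intends by leaving this lemma to the reader: reuse the pair $(q,r)$ supplied by $\varphi$ and push the strict inequality $\varphi(r) < \varphi(b)$ through the (strictly) isotone map $f$. Your closing observation that the strict notion of isotone is what makes this work is also correct; nothing further is needed.
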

\begin{proof}
Left to the reader.
\end{proof}

\begin{cor}(\cite[Prop. 11]{Samuel71}, \cite[Prop. 4]{Nagata85})
\label{0.0.5}
A ring which admits a generalized Euclidean function is Euclidean.  
\end{cor}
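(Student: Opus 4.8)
The plan is to reduce a generalized Euclidean function to an honest $\Ord$-valued Euclidean function by post-composing with a suitable isotone map into $\Ord$; the only real content is producing that map, and the preceding results hand it to us once we arrange for the value class to be a set.

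Let $\varphi \colon R^\bullet \ra X$ be a generalized Euclidean function, with $X$ an Artinian ordered class. The first step is to replace $X$ by the image $X' = \varphi(R^\bullet)$, equipped with the induced ordering. Since $R^\bullet$ is a set, $X'$ is a set, and hence it is downward small for the trivial reason that every ordered set is. It is also Artinian: the inclusion $X' \hookrightarrow X$ is isotone, so any isotone map $\Z^- \ra X'$ would compose to an isotone map $\Z^- \ra X$, contradicting the hypothesis that $X$ is Artinian. Moreover the corestriction $\varphi \colon R^\bullet \ra X'$ is still a generalized Euclidean function, since the defining division property refers only to the (unchanged) values of $\varphi$.

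Now $X'$ is an Artinian ordered set, so by Theorem \ref{0.0.2} (or directly by the discussion immediately following it) the length function $\lambda_{X'} \colon X' \ra \Ord$ is an isotone map. Applying Lemma \ref{0.0.4} with $Y = \Ord$ and $f = \lambda_{X'}$, the composite $\lambda_{X'} \circ \varphi \colon R^\bullet \ra \Ord$ is a generalized Euclidean function. But a generalized Euclidean function with values in $\Ord$ is precisely an $\Ord$-valued Euclidean function, so $R$ is Euclidean.

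The substantive point, and the only place where set-theoretic care is required, is the passage from the possibly proper class $X$ to the set $X'$: this is exactly what lets us invoke Theorem \ref{0.0.2}, whose downward-smallness hypothesis is otherwise not guaranteed for an arbitrary Artinian class. Everything after that is a formal application of Lemma \ref{0.0.4}, so I anticipate no genuine obstacle.
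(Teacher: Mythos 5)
Your proof is correct and takes essentially the same route as the paper: compose $\varphi$ with the length function and invoke Lemma \ref{0.0.4} to see that the composite $\lambda \circ \varphi: R^{\bullet} \ra \Ord$ is Euclidean. The only difference is that you first corestrict to the image $\varphi(R^{\bullet})$ to secure downward smallness before appealing to Theorem \ref{0.0.2}, a legitimate point of set-theoretic care that the paper's one-line proof (which writes $\lambda_X \circ \varphi$ directly, with $X$ a possibly proper class) passes over silently.
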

\begin{proof}
If $\varphi: R \ra X$ is generalized Euclidean,  $\lambda_X \circ \varphi: R \ra \Ord$ is Euclidean.
\end{proof}
\noindent
Proposition \ref{0.0.5} may suggest that there is nothing to gain in considering 
Euclidean functions with values in a non-well-ordered sets.  But this is not the case! 

\begin{lemma}(\cite[Thm. 2]{Nagata85})
\label{NAGATALEMMA}
Let $R_1,R_2$ be commutative rings, $X_1,X_2$ be Artinian ordered classes, and $\varphi_1: R_1 \ra X_1$, $\varphi_2: R_2 \ra X_2$ be generalized Euclidean 
functions.  Then $\varphi_1 \times \varphi_2: R_1 \times R_2 \ra X_1 \times X_2$ 
is a generalized Euclidean function.
\end{lemma}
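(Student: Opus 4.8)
The plan is to divide component-wise and land in the product poset $X_1 \times X_2$, using the extension-at-zero convention introduced above, under which $\varphi_i(0)$ is the top element of $X_i$ (the supremum of $\varphi_i(R_i^\bullet)$ plus one). First I would record that the codomain is legitimate, i.e. that the product of two Artinian ordered classes is again Artinian: an isotone map $\Z^- \ra X_1 \times X_2$ projects to two weakly isotone maps $\Z^- \ra X_i$, each of which must be eventually constant since $X_i$ admits no infinite strictly descending chain, whence the product map is eventually constant, contradicting isotonicity. Thus $\varphi_1 \times \varphi_2$ has a bona fide Artinian codomain, and it remains only to verify the Euclidean division property.

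Given $a = (a_1,a_2) \in R_1 \times R_2$ and $b = (b_1,b_2) \in (R_1 \times R_2)^\bullet$, the division $a = qb + r$ decouples as $a_i = q_i b_i + r_i$, with the proviso that when $b_i = 0$ the remainder is forced to be $r_i = a_i$. I would split into two cases. If $b \mid a$ in $R_1 \times R_2$, I simply take $r = 0$ and invoke the escape clause of the definition. If $b \nmid a$, then some coordinate $i_0$ satisfies $b_{i_0} \nmid a_{i_0}$; in that coordinate the factorwise Euclidean property (when $b_{i_0} \neq 0$) or the forced choice $r_{i_0} = a_{i_0}$ (when $b_{i_0} = 0$, so that $a_{i_0} \neq 0$) yields a nonzero $r_{i_0}$ with $\varphi_{i_0}(r_{i_0}) < \varphi_{i_0}(b_{i_0})$, the latter inequality using $\varphi_{i_0}(0) = \varphi_{i_0}(b_{i_0})$ being the top in the $b_{i_0} = 0$ subcase.

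The main obstacle is that the product-order inequality $\varphi(r) < \varphi(b)$ demands $\varphi_i(r_i) \leq \varphi_i(b_i)$ in \emph{every} coordinate, yet a vanishing remainder component $r_i = 0$ has value $\varphi_i(0)$, the top of $X_i$, which strictly exceeds $\varphi_i(b_i)$ whenever $b_i \neq 0$. I would neutralize this by a small replacement: in any coordinate with $b_i \neq 0$ but $b_i \mid a_i$ (so the natural remainder is $0$), I replace $(q_i,r_i)$ by $(q_i - 1, b_i)$, which keeps $a_i = q_i b_i + r_i$ valid while giving the harmless $\varphi_i(r_i) = \varphi_i(b_i)$. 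After this adjustment every coordinate satisfies $\varphi_i(r_i) \leq \varphi_i(b_i)$ — the coordinates with $b_i = 0$ being automatic, since there $\varphi_i(b_i) = \varphi_i(0)$ is the top and dominates $\varphi_i(a_i)$. Because the distinguished coordinate $i_0$ has $b_{i_0} \nmid a_{i_0}$, the replacement is never applied there, so its strict inequality and its nonzero remainder survive; the assembled $r$ is therefore nonzero with $\varphi(r) < \varphi(b)$ in the product order, which is exactly what the Euclidean condition requires. The only point needing care is precisely this last bookkeeping: confirming that strictness is preserved in $i_0$ while the patch repairs all the other coordinates.
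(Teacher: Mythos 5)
Your proof is correct and is essentially the paper's own argument: both divide coordinate-wise and then repair any coordinate in which the remainder vanishes but the divisor $b_i$ does not by replacing $(q_i, r_i)$ with $(q_i - 1, b_i)$, which is exactly what the paper's Cases 2 and 3 do, while your distinguished coordinate $i_0$ plays the role of the paper's Case 1 guarantees of a surviving strict inequality. The only differences are organizational: you single out the coordinate where divisibility fails and patch the rest uniformly, and you additionally check that $X_1 \times X_2$ is Artinian, a point the paper leaves implicit.
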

\begin{proof} Let $x = (x_1,x_2) \in R$, $y = (y_1,y_2) \in R^{\bullet}$.  
We may assume $y \nmid x$ and thus $x \in R^{\bullet}$.  Since $\varphi_1$ and $\varphi_2$ are Euclidean, for $i = 1,2$ there are $q_i,r_i \in R_i$ with $x_i = q_i y_i + r_i$ and ($r_i = 0$ or $\varphi_i(r_i) < \varphi_i(y_i)$).  Since $y \nmid x$, $r \neq 0$. Now: \\
Case 1: Suppose that any one of the following occurs: \\
(i) $r_1 \neq 0$, $r_2 \neq 0$.  \\
(ii) $r_1 = y_1 = 0$, and thus $r_2,y_2 \neq 0$.  \\
(iii) $r_2 = y_2 = 0$, and thus $r_1,y_1 \neq 0$.  \\
Put $q = (q_1,q_2)$ and $r = (r_1,r_2)$, so $x = qy + r$ and 
\[\varphi(r) = (\varphi(r_1),\varphi(r_2)) < (\varphi(y_1),\varphi(y_2)) = \varphi(y). \]  
Case 2: Suppose $r_1 = 0$, $y_1 \neq 0$, and thus $r_2 \neq 0$.  Then 
take $q = (q_1-1,q_2)$ and $r = (y_1,r_2)$, so $x = qy + r$ and 
\[ \varphi(r) = (\varphi(y_1),\varphi(r_2)) < (\varphi(y_1),\varphi(y_2)) = 
\varphi(y). \]
Similarly if $r_2 = 0$, $y_2 \neq 0$.  \\
Case 3: Suppose $r_2 =0$, $y_2 \neq 0$, and thus $r_1 \neq 0$.  Put 
$q = (q_1,q_2-1)$ and $r = (r_1,y_2)$, so $x = qy +r$ and 
\[ \varphi(r) = (\varphi(r_1),\varphi(y_2)) < (\varphi(y_1),\varphi(y_2)) = 
\varphi(y). \]
\end{proof}

\subsection{Isotone Euclidean Functions}
\textbf{} \\ \\ \noindent
For a ring $R$, the divisibility relation is a \textbf{quasi-ordering} -- i.e., 
reflexive and transitive but not necessarily anti-symmetric.  If $X$ and $Y$ 
are quasi-ordered sets, we can define an isotone map $f: X \ra Y$ just as above:  
 if $x_1 < x_2 \implies f(x_1) < f(x_2)$.  
\begin{lemma}
Let $X$ be a quasi-ordered set and $Y$ be an ordered set.  Suppose 
that $f: X \ra Y$ is an isotone map.  Then $X$ is ordered.
\end{lemma}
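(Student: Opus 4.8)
The plan is to show directly that the quasi-ordering on $X$ is anti-symmetric, which is exactly the one axiom separating an ordered set from a quasi-ordered set. Reflexivity and transitivity of $\leq$ on $X$ are given by the hypothesis that $X$ is quasi-ordered, so the sole task is to prove: if $x_1 \leq x_2$ and $x_2 \leq x_1$, then $x_1 = x_2$.

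First I would pin down the convention for the strict relation $<$ on a quasi-ordered set. Since the excerpt defines isotone maps on quasi-ordered sets ``just as above'', the strict relation must be read as $x_1 < x_2 \iff (x_1 \leq x_2$ and $x_1 \neq x_2)$. This is the one interpretive point that genuinely matters: were one instead to declare $x_1 < x_2$ to mean $x_1 \leq x_2$ together with $x_2 \not\leq x_1$, then any two distinct $\leq$-equivalent elements would be incomparable under $<$, the isotone condition would impose nothing on them, and the lemma would fail (one could send both to a single point of $Y$). So I would state the convention explicitly at the outset.

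The heart of the argument is then a one-line contradiction. Suppose $x_1 \leq x_2$ and $x_2 \leq x_1$ while $x_1 \neq x_2$. Under the convention above this gives both $x_1 < x_2$ and $x_2 < x_1$. Applying the isotone map $f$ to each yields $f(x_1) < f(x_2)$ and $f(x_2) < f(x_1)$ in $Y$. But $Y$ is an ordered set, so its strict order is irreflexive: the two inequalities chain to $f(x_1) < f(x_1)$, which is impossible (equivalently, $f(x_1) \leq f(x_2) \leq f(x_1)$ forces $f(x_1) = f(x_2)$ by anti-symmetry in $Y$, contradicting $f(x_1) < f(x_2)$). Hence $x_1 = x_2$, establishing anti-symmetry and completing the proof.

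I do not anticipate any real obstacle: the entire content is the observation that an isotone map into an anti-symmetric target detects and rules out the failure of anti-symmetry in the source. The only step warranting care—and the one I would flag—is fixing the meaning of $<$ on a quasi-ordered set, since, as noted, the truth of the statement hinges on it.
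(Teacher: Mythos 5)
Your proof is correct and is essentially the paper's own argument: from a failure of anti-symmetry one gets $x_1 < x_2$ and $x_2 < x_1$, hence $f(x_1) < f(x_2)$ and $f(x_2) < f(x_1)$, impossible in the ordered set $Y$. Your explicit discussion of the convention that $x_1 < x_2$ means $x_1 \leq x_2$ and $x_1 \neq x_2$ is a worthwhile clarification -- the paper leaves this implicit, and as you note its proof would be vacuous under the other reading -- but it does not change the route.
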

\begin{proof}
For $x_1, x_2 \in X$, $x_1 < x_2, \ x_2 < x_1 \implies f(x_1) < f(x_2), \ f(x_2) < f(x_1)$.
\end{proof}
\noindent
If $X$ is a quasi-ordered set, it has an ordered completion: i.e., an ordered 
set $\overline{X}$ and a weakly isotone map $X \ra \overline{X}$ which is 
universal for weakly isotone maps from $X$ into an ordered set.  Indeed, we 
simply take the quotient of $X$ under the equivalence relation $x_1 \sim x_2$ 
if $x_1 \leq x_2$ and $x_2 \leq x_1$.  If we do this for the divisibility relation on $R$, we get precisely the ordered set $\Prin R$ of principal ideals 
of $R$.  All this is to motivate the following definition.  \\ \indent
A Euclidean function $\varphi: R \ra \Ord$ is \textbf{weakly isotone} (resp. \textbf{isotone}) if whenever $x$ divides $y$, $\varphi(x) \leq \varphi(y)$ (resp. whenever $x$ strictly divides $y$, $\varphi(x) < \varphi(y)$.  
\\ \\
Example 2.4: Define $\varphi: \Z^{\bullet} \ra \Ord^{\bullet}$ by $1 \mapsto 2$ 
and $n \mapsto |n|$ else.  Then $\varphi$ is Euclidean.  But $(1) = (-1)$ and $\varphi(1) \neq \varphi(-1)$, so $\varphi$ 
is not weakly isotone.  

\begin{thm}
\label{1.4}
\label{SAMUEL1}
Let $\varphi: R^{\bullet} \ra \Ord$ be a Euclidean function.  Then the set of isotone Euclidean functions $\psi \leq \varphi$ has a maximal element 
\[\underphi: x \in R^{\bullet} \mapsto \min_{y \in (x)^{\bullet}} \varphi(y). \]
\end{thm}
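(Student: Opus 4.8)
The plan is to verify directly that the displayed function $\underphi$ lies in the set of isotone Euclidean functions dominated by $\varphi$, and then to show it dominates every such function. First I would note that for each $x \in R^{\bullet}$ the collection $\{\varphi(y) : y \in (x)^{\bullet}\}$ is a nonempty subclass of $\Ord$, hence has a least element by well-ordering; so $\underphi$ is well defined, and since $x \in (x)^{\bullet}$ we get $\underphi(x) \leq \varphi(x)$, i.e. $\underphi \leq \varphi$. I would also record the trivial monotonicity: if $x$ divides $y$ then $(y)^{\bullet} \subseteq (x)^{\bullet}$, so the minimum defining $\underphi(x)$ is taken over a larger set and $\underphi(x) \leq \underphi(y)$. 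In particular $\underphi$ depends only on the ideal $(x)$, so it is constant on associates and weakly isotone.

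Next I would check that $\underphi$ is Euclidean. The key idea is to divide not by $b$ itself but by a $\varphi$-minimal multiple of $b$. Given $a \in R$ and $b \in R^{\bullet}$, choose $b_0 \in (b)^{\bullet}$ with $\varphi(b_0) = \underphi(b)$ and write $b_0 = bc$. Applying the Euclidean property of $\varphi$ to $a$ and $b_0$ produces $q,r$ with $a = qb_0 + r = (qc)b + r$ and either $r = 0$ or $\varphi(r) < \varphi(b_0)$. In the first case we are done; in the second, $\underphi(r) \leq \varphi(r) < \varphi(b_0) = \underphi(b)$, which is exactly the Euclidean condition for $\underphi$.

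The step I expect to be the main obstacle is showing that $\underphi$ is \emph{genuinely} isotone, i.e. that strict divisibility forces a strict drop; the monotonicity above only gives $\leq$. Here I would again exploit the Euclidean property of $\varphi$. Suppose $x$ strictly divides $z$ and pick $w \in (z)^{\bullet}$ with $\varphi(w) = \underphi(z)$; then $x$ divides $w$. Dividing $x$ by $w$ via $\varphi$ gives $x = qw + r$ with $r = 0$ or $\varphi(r) < \varphi(w)$. The remainder cannot vanish: $r = 0$ would give $w \mid x$, whence $(x) = (w) \subseteq (z) \subsetneq (x)$, a contradiction. Thus $r \neq 0$, and since $x \mid x$ and $x \mid w$ we have $x \mid r$, so $r \in (x)^{\bullet}$; therefore $\underphi(x) \leq \varphi(r) < \varphi(w) = \underphi(z)$, as needed.

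Finally, for maximality I would take any isotone Euclidean $\psi \leq \varphi$ and show $\psi \leq \underphi$. For $x \in R^{\bullet}$ and any $y \in (x)^{\bullet}$ we have $x \mid y$, so $\psi(x) \leq \psi(y) \leq \varphi(y)$; taking the minimum over $y \in (x)^{\bullet}$ yields $\psi(x) \leq \underphi(x)$. The point to be careful about is that this uses $x \mid y \implies \psi(x) \leq \psi(y)$, i.e. that $\psi$ is weakly isotone (hence constant on associates); since divisibility is only a quasi-ordering, this does not formally follow from the strict condition alone, so I would invoke weak isotonicity as the operative hypothesis, consistent with the motivation that these functions factor through $\Prin R$. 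Granting this, $\underphi$ dominates every competitor and is the maximal — indeed greatest — element of the set.
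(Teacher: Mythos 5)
Your proposal is correct and takes essentially the same route as the paper: you obtain the Euclidean property by dividing by a $\varphi$-minimal element of $(b)^{\bullet}$, strict isotonicity from the fact that the remainder upon dividing $x$ by a multiple of $x$ is again a nonzero multiple of $x$, and maximality from weak isotonicity of the competitor $\psi$. The only differences are cosmetic: you prove isotonicity directly by applying $\varphi$'s Euclidean property to the minimal multiple $w$, whereas the paper argues by contradiction using the Euclidean property of $\underphi$ established in its first step, and you explicitly flag the reliance on weak isotonicity of $\psi$ (needed since strict divisibility alone says nothing about associates) which the paper uses tacitly.
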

\begin{proof}
Step 1: We show $\underphi$ is a Euclidean function: let $a \in R, \ b \in R^{\bullet}$.  Then there exists $c \in R$ such that $bc \neq 0$ and $\underphi(b) = \varphi(bc)$.  
Since $\varphi$ is Euclidean, there are $q,r \in R$ with $a = qbc + r$ and 
either $r = 0$ -- so $b \ | a $--  or $\underphi(r) \leq \varphi(r) < \varphi(bc) = \underphi(b)$.  \\
Step 2: We show $\underphi$ is isotone: If $ac \neq 0$, then $\underphi(a) = \min_{y \ | \ ay \neq 0} \varphi(ay) 
\leq \min_{cy \ | \ acy \neq 0} \varphi(acy) = \underphi(ac)$.  Conversely, 
suppose $\underphi(ac) = \underphi(a)$, and write $a = qac + r$ with $r = 0$ 
or $\underphi(r) < \underphi(ac) = \underphi(a)$.  But if $r \neq 0$, then by 
what we have just shown, $\underphi(r) = \underphi(a(1-qc)) \geq \underphi(a)$, 
a contradiction.  So $r = 0$ and $(a) = (ac)$. \\
Step 3: By construction $\underphi \leq \varphi$.  Moreover, if $\psi \leq \varphi$ is an isotone Euclidean function, then for all $a,c \in R$ with $ac \neq 0$, $\psi(a) \leq \psi(ac) \leq \varphi(ac)$, so $\psi(a) \leq \underphi(a)$.  
\end{proof}

\begin{cor}
\label{SAMUEL1COR}
A Euclidean function is weakly isotone iff it is isotone.  
\end{cor}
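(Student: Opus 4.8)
The plan is to funnel both implications of the ``iff'' through the operator $\varphi \mapsto \underphi$ furnished by Theorem \ref{SAMUEL1}, exploiting the fact that its value $\underphi(x) = \min_{y \in (x)^{\bullet}} \varphi(y)$ depends only on the principal ideal $(x)$. Concretely, I would show that for a Euclidean function $\varphi$ the three conditions ``$\varphi$ is weakly isotone'', ``$\varphi = \underphi$'', and ``$\varphi$ is isotone'' are equivalent; the corollary is then immediate. As a preliminary observation I would record that $\underphi$ is itself weakly isotone: if $x$ divides $z$ then $(z) \subseteq (x)$, hence $(z)^{\bullet} \subseteq (x)^{\bullet}$, and minimizing $\varphi$ over the smaller set can only increase the value, so $\underphi(x) \leq \underphi(z)$. (That $\underphi$ is isotone is already asserted by Theorem \ref{SAMUEL1}.)

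For the direction isotone $\Rightarrow$ weakly isotone I would invoke the maximality clause of Theorem \ref{SAMUEL1}. If $\varphi$ is isotone, then $\varphi$ is an isotone Euclidean function satisfying $\varphi \leq \varphi$, so it belongs to the family whose maximum is $\underphi$; thus $\varphi \leq \underphi$. Since the reverse inequality $\underphi \leq \varphi$ holds by construction (take $y = x$ in the defining minimum), we get $\varphi = \underphi$, and by the preliminary observation $\varphi$ is weakly isotone.

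For the direction weakly isotone $\Rightarrow$ isotone I would instead compute $\underphi$ directly. If $\varphi$ is weakly isotone, then for every $y \in (x)^{\bullet}$ we have $x \mid y$ and hence $\varphi(x) \leq \varphi(y)$; so the minimum defining $\underphi(x)$ is attained at $y = x$, giving $\underphi(x) = \varphi(x)$. Thus $\varphi = \underphi$, which is isotone by Theorem \ref{SAMUEL1}.

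The one genuine subtlety — and the reason one cannot just unwind the definitions — is that ``isotone'' constrains $\varphi$ only on pairs related by strict divisibility and says nothing a priori about associate elements, so it is not evident that an isotone Euclidean function is constant on associates; Example 2.4 confirms that without the Euclidean hypothesis these notions differ. The real content is therefore carried entirely by Theorem \ref{SAMUEL1}, and the step I would be most careful about is the appeal to maximality in the first direction, where one must remember that the reflexive inequality $\varphi \leq \varphi$ is exactly what places $\varphi$ itself inside the relevant family.
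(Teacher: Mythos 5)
Your handling of the substantive direction, weakly isotone $\Rightarrow$ isotone, is correct but follows a genuinely different route from the paper's. The paper re-runs the division argument from scratch: write $a = q(ac) + r$; if $r \neq 0$ then $a \mid r$, so weak isotonicity forces $\varphi(r) \geq \varphi(a)$, contradicting $\varphi(r) < \varphi(ac) = \varphi(a)$; hence $r = 0$ and $(a) = (ac)$. You instead observe that weak isotonicity makes the minimum defining $\underphi(x)$ be attained at $y = x$, so $\varphi = \underphi$, which Theorem \ref{SAMUEL1} already asserts is isotone. That is a clean shortcut, and it buys a tidy reformulation: the weakly isotone Euclidean functions are exactly the fixed points of $\varphi \mapsto \underphi$.

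The gap is in the direction you yourself flagged as delicate, isotone $\Rightarrow$ weakly isotone, and your fix does not close it. The maximality clause of Theorem \ref{SAMUEL1} is established (Step 3 of the paper's proof) by the computation $\psi(a) \leq \psi(ac) \leq \varphi(ac)$ for \emph{all} $a,c$ with $ac \neq 0$, and the first inequality is weak isotonicity of $\psi$, since $(a)$ may equal $(ac)$. So that clause, as proved, applies only to functions already known to be constant on associates; invoking it to show that a merely isotone $\varphi$ satisfies $\varphi \leq \underphi$ is circular, because whether $\varphi \leq \underphi$ holds is precisely the question of what $\varphi$ does on associates. Indeed, under the literal definition of isotone (only pairs with $(y) \subsetneq (x)$ are constrained) the implication is false: on $R = \Z$ take $\varphi(-1) = 0$, $\varphi(1) = 1$, and $\varphi(n) = \lfloor \log_2 |n| \rfloor + 1$ for $|n| \geq 2$. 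This $\varphi$ is Euclidean (divide with remainder $|r| \leq |b|/2$) and isotone (if $x$ strictly divides $y$ then $|y| \geq 2|x|$, and $\varphi(x) < \varphi(y)$ in every case), yet $1 \mid -1$ with $\varphi(1) > \varphi(-1)$, so it is not weakly isotone; moreover $\underphi(1) = 0 < \varphi(1)$, so $\varphi \not\leq \underphi$ and the cited maximality fails for this very $\varphi$. To be fair, the paper's own proof is no better at this point -- it dismisses the direction with ``of course'' -- and both the corollary and Theorem \ref{SAMUEL1} are really being read under the tacit convention that isotone Euclidean functions are constant on associates, i.e., descend to $\Prin R$. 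Once that convention is granted, the direction is immediate from the definitions ($x \mid y$ means $(y) = (x)$ or $(y) \subsetneq (x)$) and needs no machinery; without it, no argument can succeed.
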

\begin{proof} Of course any isotone function is weakly isotone.  Conversely, let $\varphi: R^{\bullet} \ra \Ord$ be a weakly isotone Euclidean function, let 
$a,c \in R$ with $ac \neq 0$, and suppose $\varphi(ac) = \varphi(c)$.  Write 
$a = qac + r$ with $r = 0$ or $\varphi(r) < \varphi(ac) = \varphi(a)$.  If 
$r \neq 0$, then $\varphi(r) = \varphi(a(1-qc)) \geq \varphi(a)$, contradiction.  
So $r = 0$ and $(a) = (ac)$.  
\end{proof}

\subsection{The Bottom Euclidean Function and the Euclidean Order Type}
\textbf{} \\ \\ \noindent
For any commutative ring $R$, let $\Euc(R) \subset \Ord^R$ be the subclass of Euclidean functions $\varphi: R \ra \Ord$, with the induced partial ordering.

\begin{lemma}
\label{0.0.1.5}
Let $R$ be a commutative ring.  Then every nonempty subclass of $\Euc(R)$ has an infimum 
in $\Euc(R)$.  
\end{lemma}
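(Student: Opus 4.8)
The plan is to imitate the proof of Lemma \ref{0.0.1}: given a nonempty subclass $\mathcal{C} = \{\varphi_c\}$ of $\Euc(R)$, I would first produce a candidate infimum and then verify that it actually lies in $\Euc(R)$. The natural candidate is the pointwise minimum: for $x \in R^{\bullet}$ set $\varphi(x) = \min_c \varphi_c(x)$, which is well-defined because $\{\varphi_c(x)\}$ is a nonempty class of ordinals and hence has a least element. One caveat I would flag explicitly is that the value at $0$ cannot simply be taken to be $\min_c \varphi_c(0)$: since $\varphi_c(0) = \sup_{x \in R^{\bullet}} \varphi_c(x) + 1$ and in general $\sup_x \min_c \neq \min_c \sup_x$, the pointwise infimum in $\Ord^R$ need not obey the extension-at-zero convention and so need not belong to $\Euc(R)$ at all. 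I would therefore define $\varphi$ on $R^{\bullet}$ by the pointwise minimum and then set $\varphi(0) = \sup_{x \in R^{\bullet}} \varphi(x) + 1$ by fiat, so that $\varphi$ respects the convention by construction.

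The crux is showing that $\varphi$ has the division property, and here the key move is to choose the index realizing the minimum at the divisor. Fix $a \in R$ and $b \in R^{\bullet}$. Since $\{\varphi_c(b)\}$ is a nonempty class of ordinals, I may pick $c_0$ with $\varphi_{c_0}(b) = \varphi(b)$. Applying that $\varphi_{c_0}$ is Euclidean to the pair $(a,b)$ yields $q, r \in R$ with $a = qb + r$ and either $r = 0$ or $\varphi_{c_0}(r) < \varphi_{c_0}(b)$. If $r = 0$ we are done; otherwise $\varphi(r) = \min_c \varphi_c(r) \leq \varphi_{c_0}(r) < \varphi_{c_0}(b) = \varphi(b)$, which is exactly the required inequality. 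This is the step I expect to be the main obstacle, or rather the single idea that makes everything work: one cannot in general splice together the quotients and remainders coming from different $\varphi_c$'s, and the point is that a single well-chosen $c_0$ discharges the whole condition at once. Note that $a$ may be $0$ here, but this only enters through the trivial case $r=0$, so the value $\varphi(0)$ plays no role in verifying the division property.

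It then remains to check that $\varphi$ is genuinely the infimum in $\Euc(R)$. That $\varphi$ is a lower bound on $R^{\bullet}$ is immediate, and at $0$ it follows from $\sup_x \min_c \varphi_c(x) + 1 \leq \sup_x \varphi_{c'}(x) + 1 = \varphi_{c'}(0)$ for each $c'$. For maximality, if $\psi \in \Euc(R)$ satisfies $\psi \leq \varphi_c$ for all $c$, then $\psi(x) \leq \min_c \varphi_c(x) = \varphi(x)$ for every $x \in R^{\bullet}$, and the inequality $\psi(0) \leq \varphi(0)$ then follows because both $\psi$ and $\varphi$ obey the extension-at-zero convention and $\sup$ is weakly isotone. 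Hence $\psi \leq \varphi$, so $\varphi$ is the greatest lower bound and lies in $\Euc(R)$, which completes the argument.
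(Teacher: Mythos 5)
Your proof is correct and follows essentially the same route as the paper's: take the pointwise infimum and, to verify the division property at a pair $(a,b)$, choose the single index $c_0$ realizing the minimum at the divisor $b$, so that $\varphi(r) \leq \varphi_{c_0}(r) < \varphi_{c_0}(b) = \varphi(b)$. Your additional care with the extension-at-zero convention (defining $\varphi(0)$ by the convention rather than as a pointwise minimum, since $\sup_x \min_c \neq \min_c \sup_x$ in general) is a legitimate refinement of a point the paper's proof passes over silently, but it does not change the substance of the argument.
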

\begin{proof}
Let $\mathcal{C} = \{\varphi_c\}$ be a nonempty subclass of $\Euc(R)$, and let $\varphi$ be the infimum in $\Ord^R$; it suffices to show that 
$\varphi$ is Euclidean.  Let $a,b \in R$ with $b \nmid a$.  Choose $i \in I$ such that $\varphi(b) = \varphi_i(b)$.  Since $\varphi_i$ is Euclidean, there are $q,r \in R$ such that $\varphi_i(r) < 
\varphi_i(b)$, and then $\varphi(r) \leq \varphi_i(r) < \varphi_i(b) = \varphi(b)$.
\end{proof}

\begin{thm}
\label{BOTTOMTHM}
Let $R$ be a Euclidean ring.  \\
a) The class $\Euc(R)$ of all Euclidean functions on $R$ has a bottom element $\varphi_R$. \\
b) The bottom Euclidean function $\varphi_R$ is isotone.  \\
c) The set $\varphi_R(R)$ is an ordinal.
\end{thm}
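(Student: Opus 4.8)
The plan is to prove the three parts in order, using the infrastructure already built up. Part (a) follows essentially immediately from Lemma \ref{0.0.1.5}: since $R$ is Euclidean, the class $\Euc(R)$ is nonempty, so it has an infimum $\varphi_R \in \Euc(R)$ by that lemma. Being the infimum of the entire class, $\varphi_R$ is by definition $\leq \varphi$ for every $\varphi \in \Euc(R)$, hence is the bottom element. So the only work in (a) is to invoke the earlier lemma and observe that the infimum of all Euclidean functions is a lower bound for all of them.

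For part (b), I would argue that $\varphi_R$ must be isotone by a minimality/comparison argument against the construction of Theorem \ref{SAMUEL1}. Given $\varphi_R$, Theorem \ref{SAMUEL1} produces an isotone Euclidean function $\underline{\varphi_R} \leq \varphi_R$. But $\varphi_R$ is the \emph{bottom} element of $\Euc(R)$, so $\varphi_R \leq \underline{\varphi_R}$ as well; combined with $\underline{\varphi_R} \leq \varphi_R$ this forces $\varphi_R = \underline{\varphi_R}$, and the latter is isotone. Thus $\varphi_R$ is isotone. This is clean precisely because Theorem \ref{SAMUEL1} already packages the passage to an isotone function below any given Euclidean function.

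For part (c), the goal is to show that the image $\varphi_R(R)$ is downward closed in $\Ord$, i.e. that it is an initial segment, which for a set of ordinals is equivalent to being an ordinal. Concretely I would show: if $\beta \in \varphi_R(R)$, say $\beta = \varphi_R(x)$, and $\alpha < \beta$, then $\alpha \in \varphi_R(R)$. The natural approach is to suppose for contradiction that some value is skipped and then modify $\varphi_R$ to lower it, contradicting minimality. Specifically, if $\alpha < \beta$ with $\alpha \notin \varphi_R(R)$ but some larger value is attained, define $\psi$ to agree with $\varphi_R$ except by collapsing the attained values above the gap downward (each value that is $> \alpha$ is decreased to the next attained value at or below it). One then checks that $\psi$ is still a Euclidean function — the key point being that the Euclidean property only refers to the \emph{order relation} among values $\varphi_R(r) < \varphi_R(b)$, and order-preservingly collapsing a gap preserves all such strict inequalities — and that $\psi < \varphi_R$, contradicting the fact that $\varphi_R$ is the bottom element. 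Hence no values are skipped and $\varphi_R(R)$ is an initial segment of $\Ord$, therefore an ordinal.

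The main obstacle I anticipate is in part (c): one must verify carefully that the order-collapsing modification $\psi$ is genuinely Euclidean and is genuinely strictly below $\varphi_R$ (in the pointwise order on $\Ord^R$), rather than merely incomparable. The cleanest framing is to compose $\varphi_R$ with the unique isotone order-isomorphism from the set $\varphi_R(R) \subset \Ord$ onto its order type (an ordinal); Lemma \ref{0.0.4} then guarantees that the composite is again (generalized) Euclidean, and this composite is $\leq \varphi_R$ with equality forcing $\varphi_R(R)$ to already be an ordinal. This route sidesteps ad hoc case analysis: the order type of $\varphi_R(R)$ is an ordinal $\gamma$, the collapsing map $c: \varphi_R(R) \to \gamma$ is isotone, so $c \circ \varphi_R$ is Euclidean and satisfies $c \circ \varphi_R \leq \varphi_R$ pointwise; minimality of $\varphi_R$ gives $c \circ \varphi_R = \varphi_R$, whence $c$ is the identity and $\varphi_R(R) = \gamma$ is an ordinal.
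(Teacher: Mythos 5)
Your proof is correct, and parts (a) and (b) coincide exactly with the paper's argument: (a) is immediate from Lemma \ref{0.0.1.5}, and (b) plays minimality of $\varphi_R$ against the isotone Euclidean function $\underline{\varphi_R} \leq \varphi_R$ supplied by Theorem \ref{SAMUEL1}. In part (c) your underlying idea is also the paper's -- pit minimality of $\varphi_R$ against a gap-collapsing modification -- but your final formulation is a genuinely cleaner packaging. The paper argues locally and by contradiction: given $\alpha < \beta$ with $\beta \in \varphi_R(R)$ and $\alpha \notin \varphi_R(R)$, it takes the least attained value $\alpha' > \alpha$ and redefines $\varphi_R$ to take the value $\alpha$ wherever it took the value $\alpha'$, asserting (without spelling out the verification) that the result is a smaller Euclidean function. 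Your version collapses all gaps at once: letting $c : \varphi_R(R) \ra \gamma$ be the order isomorphism onto the order type, isotonicity of $c$ plus Lemma \ref{0.0.4} shows $c \circ \varphi_R$ is Euclidean, the pointwise bound $c \circ \varphi_R \leq \varphi_R$ follows from $c(\xi) \leq \xi$, and minimality then forces $c$ to be the identity, so $\varphi_R(R) = \gamma$ is an ordinal. What this buys: the check that the collapsed function is still Euclidean -- the step the paper leaves implicit, and which your first sketch would have had to do by hand -- comes for free from Lemma \ref{0.0.4}, and the argument is direct rather than by contradiction. The one point you should make explicit is the standard fact $c(\xi) \leq \xi$ (the order type of a set of ordinals each less than $\xi$ is at most $\xi$), since that is precisely what makes $c \circ \varphi_R \leq \varphi_R$ hold in the pointwise order rather than the two functions being merely incomparable.
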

\begin{proof}
a) This is immediate from Lemma \ref{0.0.1.5}.  \\
b) Since $\varphi_R$ is the bottom Euclidean function, we must 
have $\varphi_R = \underline{\varphi_R}$.  \\
c) It's enough to show $\varphi_R(R)$ is downward closed: we  
need to rule out the existence of $\alpha < \beta \in \Ord$ such that $\varphi_R(R)$ contains $\beta$ but not $\alpha$.  Let $\alpha'$ be the least 
element of $\varphi_R(R)$ exceeding $\alpha$.  If we redefine $\varphi_R$ to take the value $\alpha$ whenever $\varphi_R$ 
takes the value $\alpha'$, we get a smaller Euclidean function than $\varphi_R$: contradiction.
\end{proof}

\begin{prop}
\label{BOTTOMISOPROP}
If $R$ is Euclidean, the bottom Euclidean function $\varphi_R$ 
is isotone.
\end{prop}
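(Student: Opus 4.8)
The plan is to deduce this directly from Theorem \ref{SAMUEL1}, which attaches to any Euclidean function $\varphi$ a \emph{maximal} isotone Euclidean function $\underphi \leq \varphi$, given explicitly by $\underphi(x) = \min_{y \in (x)^{\bullet}} \varphi(y)$. The argument is a one-line squeeze: apply this construction to the bottom function $\varphi_R$ itself, and then use the minimality of $\varphi_R$ to force equality.

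First I would form $\underline{\varphi_R}$. By Theorem \ref{SAMUEL1} — specifically Step 1 of its proof, which shows that $\underphi$ is Euclidean whenever $\varphi$ is — the function $\underline{\varphi_R}$ is again an element of $\Euc(R)$, it is isotone (Step 2), and it satisfies $\underline{\varphi_R} \leq \varphi_R$ (Step 3). The only point worth flagging is that $\underline{\varphi_R}$ genuinely lands back inside $\Euc(R)$, so that the defining property of $\varphi_R$ can be brought to bear on it.

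Finally I would invoke Theorem \ref{BOTTOMTHM}(a): since $\varphi_R$ is the bottom element of $\Euc(R)$ and $\underline{\varphi_R} \in \Euc(R)$, we get $\varphi_R \leq \underline{\varphi_R}$. Combined with $\underline{\varphi_R} \leq \varphi_R$ this yields $\varphi_R = \underline{\varphi_R}$, and the right-hand side is isotone by construction, so $\varphi_R$ is isotone. I expect no genuine obstacle here: once Theorem \ref{SAMUEL1} is available the conclusion is immediate — indeed it coincides with the assertion already recorded as Theorem \ref{BOTTOMTHM}(b) — the sole substantive ingredient being the fact, supplied by that theorem, that the maximal isotone minorant of a Euclidean function is itself Euclidean.
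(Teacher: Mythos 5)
Your proposal is correct and is essentially identical to the paper's own proof: both apply Theorem \ref{SAMUEL1} to $\varphi_R$ to obtain the isotone Euclidean function $\underline{\varphi_R} \leq \varphi_R$, then use the minimality of the bottom element of $\Euc(R)$ to conclude $\underline{\varphi_R} = \varphi_R$. Your only addition is making explicit the (worth-flagging) point that $\underline{\varphi_R}$ lies in $\Euc(R)$, which the paper leaves implicit.
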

\begin{proof}
By Theorem \ref{1.4}, $\underline{\varphi_R}$ is an isotone Euclidean function.  But $\varphi_R$ is the bottom Euclidean function, so $\underline{\varphi_R} = \varphi_R$.  
\end{proof}
\noindent
For any Euclidean ring $R$ we define the 
\textbf{order type} $e(R) = \varphi_R(R) \in \Ord$.  This ordinal invariant of $R$ is our main object of interest: given a Euclidean ring $R$ 
we would like to compute its order type $e(R)$; conversely we would like 
to know which ordinals arise as order types of Euclidean rings. 
\\ \\
Example 2.5: Let $R = \Z$.   The map $n \in \Z \mapsto \#$ of binary digits of $|n|$ is the bottom Euclidean function, so $e(\Z) = \omega$.  
\\ \\
Example 2.6: Let $R = k[t]$.  The map $x \in k[t] \mapsto 1 + \deg t$ is the bottom Euclidean function, so $e(k[t])) = \omega$.

\begin{thm}
\label{NOZVALUEDTHM}
\label{1.9}
Suppose $R$ is Euclidean with $e(R) \leq \omega$.  Then: \\
a) For all $x \in R^{\bullet}$, $R/(x)$ is Artinian. \\
b) $R$ is either a PID or an Artinian principal ring.
\end{thm}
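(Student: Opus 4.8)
The plan is to first unwind the hypothesis $e(R) \leq \omega$. Since $e(R) = \varphi_R(R)$ for the bottom Euclidean function $\varphi_R$, the condition says exactly that $\varphi_R$ takes values in $\omega = \N$, i.e. $\varphi_R(x) < \omega$ for every $x \in R^{\bullet}$. I would combine this with two facts already available: $\varphi_R$ is isotone (Theorem \ref{BOTTOMTHM}(b), or Proposition \ref{BOTTOMISOPROP}), and $R$ is principal (Proposition \ref{1.2}), so that every ideal of $R$, hence every ideal of any quotient $R/(x)$, is principal.

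For part a), fix $x \in R^{\bullet}$. By the lattice correspondence, ideals of $R/(x)$ correspond to principal ideals $(d) \supseteq (x)$, i.e. to divisors $d$ of $x$ (up to associates), with a strict inclusion $(d_1) \supsetneq (d_2)$ corresponding to $d_1$ strictly dividing $d_2$. Because $\varphi_R$ is isotone, strict divisibility strictly increases $\varphi_R$, while $d \mid x$ forces $\varphi_R(d) \leq \varphi_R(x)$. Hence along any strictly descending chain of ideals of $R/(x)$ the values $\varphi_R(d_i)$ form a strictly increasing sequence of natural numbers bounded above by $\varphi_R(x) < \omega$; such a sequence has at most $\varphi_R(x)+1$ terms and is therefore finite. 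Thus $R/(x)$ satisfies the descending chain condition and is Artinian (indeed its length is at most $\varphi_R(x)+1$).

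For part b), I would invoke the structure theory of principal rings (Theorem \ref{0.4}): write $R \cong R_1 \times \cdots \times R_n \times A$ with $A$ the Artinian part, where, absorbing any field factors into $A$, we may assume each $R_i$ is a PID that is not a field — so that a factor is non-Artinian precisely when it is one of the $R_i$. If every factor is Artinian (the case $n = 0$, $R = A$), then $R$ is a finite product of Artinian rings and hence is itself Artinian principal. Otherwise some factor, say $R_1$, is a non-field PID; let $R'$ denote the product of the remaining factors. The decisive step is to rule out $R' \neq 0$ using part a): the idempotent $e = (0, 1_{R'})$ is a \emph{nonzero} element of $R$ with $(e) = 0 \times R'$, so $R/(e) \cong R_1$. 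But $R_1$, being a PID that is not a field, contains an infinite descending chain $(p) \supsetneq (p^2) \supsetneq \cdots$ and so is not Artinian, contradicting part a). Hence $R' = 0$, and $R \cong R_1$ is a PID.

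I expect part a) to carry the real content, with part b) then reducing to bookkeeping via idempotents and the Zariski--Samuel decomposition. The two points to watch are the interpretation of the hypothesis $e(R) \leq \omega$ as ``finitely valued on $R^{\bullet}$,'' and the correct handedness of isotonicity against divisibility, so that strictly \emph{descending} ideal chains translate into strictly \emph{increasing} — hence finite — sequences of natural numbers.
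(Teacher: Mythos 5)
Your proposal is correct and takes essentially the same route as the paper: part a) turns an infinite strictly descending chain of ideals of $R/(x)$ into a strictly monotone sequence of $\varphi_R$-values bounded by $\varphi_R(x) < \omega$ via isotonicity of the bottom Euclidean function, and part b) combines the Zariski--Samuel decomposition (Theorem \ref{ZSTHM}) with part a) through an idempotent. If anything, your write-up is more careful than the paper's: the paper's proof of a) states the chain with the containments (and hence the resulting inequalities) in the reverse direction, and it dismisses b) as immediate from Theorem \ref{ZSTHM}, whereas your divisor correspondence and the $R/(e) \cong R_1$ step are exactly the details needed to make both parts airtight.
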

\begin{proof} 
a) If $R/(x)$ were not Artinian, there would be a sequence of elements $\{x_n\}_{n=0}^{\infty}$ in $R$ with $x_0 = x$ and $(x_{n+1}) \subsetneq (x_n)$ for all $n \in \N$.  Applying Corollary \ref{SAMUEL1COR}b)  
we get $\varphi(x) = \varphi(x_0) > \varphi(x_1) > \ldots > \varphi(x_n) > \ldots$, 
contradicting  $\varphi(x) \in \omega$.  \\
b) This follows immediately from Theorem \ref{ZSTHM}.
\end{proof}

\begin{cor}(Fletcher \cite{Fletcher71}) \\
\label{1.10}
a) If $A$ is an Artinian Euclidean ring, then $e(R) < \omega$.  \\
b) A Euclidean ring $R$ with $e(R) = \omega$ is a domain.
\end{cor}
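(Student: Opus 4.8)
The plan is to prove part a) directly and then deduce part b) from part a) together with Theorem \ref{1.9}. For part a) (read with $e(A) < \omega$ in place of the evident typo $e(R) < \omega$), the target is to show that the bottom Euclidean function $\varphi_A$ takes only finitely many values; since $\varphi_A(A) = e(A)$ is an ordinal by Theorem \ref{BOTTOMTHM}c), a finite value set forces $e(A) < \omega$.

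To carry this out, I would first record that an Artinian Euclidean ring $A$ is principal (Proposition \ref{1.2}), so by Theorem \ref{ZSTHM}c) we may write $A \cong \prod_{i=1}^{s} A_i$ with each $A_i$ a local Artinian principal ring. In each $A_i$, with maximal ideal $\mathfrak{m}_i = (\pi_i)$ and $\pi_i$ nilpotent, every ideal is a power $(\pi_i^k)$, so $A_i$ has only finitely many ideals; hence $A$ has finitely many ideals, all of them principal, i.e.\ $\Prin A$ is finite. Now $\varphi_A$ is isotone (Theorem \ref{BOTTOMTHM}b), hence weakly isotone (Corollary \ref{SAMUEL1COR}), and thus constant on associate classes: if $(x) = (y)$ then $x \mid y$ and $y \mid x$ give $\varphi_A(x) = \varphi_A(y)$. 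Therefore $\varphi_A$ factors through the finite poset $\Prin A$, so $e(A) = \varphi_A(A)$ is finite; being an ordinal, it is $< \omega$.

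For part b), I would apply Theorem \ref{1.9}b) to $R$: since $e(R) = \omega \leq \omega$, the theorem gives that $R$ is either a PID or an Artinian principal ring. The second case cannot occur, for then $R$ would be Artinian and Euclidean and part a) would force $e(R) < \omega$, contradicting $e(R) = \omega$. Hence $R$ is a PID, and in particular an integral domain.

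The main obstacle is securing the upper bound in part a). It is tempting to compare $\varphi_A$ with the length function of the divisibility poset $\Prin A$, but that comparison only bounds $\varphi_A$ from below, since the length function is the \emph{minimal} isotone map. The decisive move is therefore to bound the cardinality of the value set rather than the individual values, which is exactly why reducing to the finiteness of the ideal lattice of an Artinian principal ring is the key step.
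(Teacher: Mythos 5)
Your proposal is correct and follows essentially the same route as the paper: for part a) the paper's proof is precisely the observation that $\varphi_A$ depends only on the ideal $(x)$ and that an Artinian principal ring has only finitely many ideals, which you have simply fleshed out (via Theorem \ref{ZSTHM}c), Corollary \ref{SAMUEL1COR}, and Theorem \ref{BOTTOMTHM}c)). Part b) is likewise deduced, exactly as in the paper, from Theorem \ref{1.9}b) together with part a).
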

\begin{proof}
a) The value of $\varphi_A$ at $x \in R$ depends only 
on the ideal $(x)$.  But an Artinian principal ring has only finitely many ideals!  So 
$e(R) < \omega$.  \\
b) This follows immediately from Theorem \ref{1.9}b) and part a).
\end{proof}

\subsection{The Localized Euclidean Function}

\begin{thm}
\label{9.9}
Let $R$ be a Euclidean domain, and let $S \subset R$ be a multiplicatively closed subset.  Then the localization $S^{-1} R$ is Euclidean and $e(S^{-1} R) \leq e(R)$.
\end{thm}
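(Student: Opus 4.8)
The plan is to build an explicit Euclidean function on $S^{-1}R$ out of the bottom Euclidean function $\varphi_R$ on $R$, and to arrange things so that the values it takes are bounded above (as a set of ordinals) by $e(R) = \varphi_R(R)$. Since $S^{-1}R$ is again a PID (localizations of PIDs are PIDs, or one invokes Proposition~\ref{1.2}), it will in particular be Euclidean once we exhibit any Euclidean function, and the order type $e(S^{-1}R)$ is the image of the \emph{bottom} function, so any Euclidean function on $S^{-1}R$ whose image sits inside $\varphi_R(R)$ forces $e(S^{-1}R) \le e(R)$ — because the bottom function is pointwise below our constructed one, hence its image is no larger.

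First I would define the candidate function $\psi: (S^{-1}R)^{\bullet} \ra \Ord$. Each nonzero element $\frac{a}{s}$ of $S^{-1}R$ generates the same ideal as $\frac{a}{1}$, since $s$ is a unit in $S^{-1}R$; so the natural guess is
\[ \psi\!\left(\tfrac{a}{s}\right) = \min_{\substack{a' \in R^{\bullet} \\ (a'/1) = (a/s)}} \varphi_R(a'), \]
the minimum of $\varphi_R$ over all numerators $a' \in R$ that generate the given principal ideal of $S^{-1}R$ after localization. Equivalently, among all $a' \in R$ with $a/s \mid a'/1$ and $a'/1 \mid a/s$ in $S^{-1}R$, take the least $\varphi_R$-value. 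Because $\varphi_R$ is isotone (Theorem~\ref{BOTTOMTHM}b) and localization only adds units, passing from $a/s$ to a numerator can only decrease the generated ideal's "size," so this minimum is well-defined and its value is some $\varphi_R(a')$ with $a' \in R^{\bullet}$. In particular every value of $\psi$ lies in $\varphi_R(R) = e(R)$, which is the containment I need.

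Next I would verify $\psi$ is Euclidean. Given $\frac{a}{s}, \frac{b}{t} \in S^{-1}R$ with $\frac{b}{t} \nmid \frac{a}{s}$, choose a numerator $b' \in R$ realizing $\psi(b/t) = \varphi_R(b')$ with $(b'/1) = (b/t)$. I then want to divide $a/s$ by $b/t$ with a controlled remainder. The clean way is to clear denominators: find a representative of $a/s$ as a fraction over the \emph{same} element, reducing to a division problem $a' = q' b' + r'$ in $R$ supplied by $\varphi_R$ being Euclidean, then map this back through the localization map $R \ra S^{-1}R$. Since the localization homomorphism sends the division identity $a' = q'b' + r'$ to a valid identity in $S^{-1}R$, and since $\psi(r'/1) \le \varphi_R(r') < \varphi_R(b') = \psi(b/t)$ by definition of $\psi$ as a minimum, the remainder condition is inherited. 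The one point requiring care is matching up the ideal $(b/t)$ with $(b'/1)$ so that the quotient-remainder pair transported from $R$ genuinely solves the division by $b/t$ rather than by $b'$; here one uses that $b'/1$ and $b/t$ differ by a unit of $S^{-1}R$, absorbing that unit into the quotient.

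\textbf{The main obstacle} I anticipate is precisely this bookkeeping with denominators and units: ensuring that a division algorithm performed "upstairs" in $R$ descends to a valid division by the actual element $b/t$ downstairs, with the remainder's $\psi$-value strictly below $\psi(b/t)$. The isotonicity of $\varphi_R$ is the tool that keeps the remainder value from growing when passing to the numerator representative, and the definition of $\psi$ by minimization is what guarantees its image never escapes $\varphi_R(R)$. Once $\psi$ is shown Euclidean with $\psi\big((S^{-1}R)^{\bullet}\big) \subseteq e(R)$, the bottom function $\varphi_{S^{-1}R}$ satisfies $\varphi_{S^{-1}R} \le \psi$ pointwise, so $e(S^{-1}R) = \varphi_{S^{-1}R}(S^{-1}R) \le \sup \psi\big((S^{-1}R)^{\bullet}\big) + 1 \le e(R)$, completing the argument.
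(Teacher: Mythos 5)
Your construction is exactly the one in the sources the paper cites in lieu of a proof (it refers the reader to Motzkin \S 4 and Samuel, Prop.\ 7, and gives no argument of its own): define $\psi$ on $(S^{-1}R)^{\bullet}$ as the minimum of $\varphi_R$ over numerators $a' \in R^{\bullet}$ generating the same ideal of $S^{-1}R$, divide upstairs by a minimizing numerator $b'$ of $y$, push $a = qb' + r$ back down, and absorb the unit relating $b'/1$ to $y$ into the quotient. That verification is sound; note that well-definedness of the minimum needs only that the ordinals are well-ordered and that $a$ itself is an admissible numerator for $a/s$ (isotonicity of $\varphi_R$ is not needed there), and the key inequality $\psi(r/s) \le \varphi_R(r) < \varphi_R(b') = \psi(y)$ is exactly right.

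The one genuine slip is in your final chain of inequalities: the claim $\sup \psi\bigl((S^{-1}R)^{\bullet}\bigr) + 1 \le e(R)$ is false in general, because the supremum need not be attained. Already for $S \subseteq R^{\times}$, so that $S^{-1}R \cong R = \Z$, one has $\psi = \varphi_{\Z}$, $\sup \psi(\Z^{\bullet}) = \omega$, and $\sup \psi + 1 = \omega + 1 > \omega = e(\Z)$. The repair is immediate from what you already proved: each value $\psi(x)$ equals $\varphi_R(a')$ for some $a' \in R^{\bullet}$, hence $\psi(x) + 1 \le e(R)$ for every $x$, and therefore
\[ e(S^{-1}R) \ = \ \sup_{x \in (S^{-1}R)^{\bullet}} \bigl(\varphi_{S^{-1}R}(x) + 1\bigr) \ \le \ \sup_{x \in (S^{-1}R)^{\bullet}} \bigl(\psi(x) + 1\bigr) \ \le \ e(R), \]
i.e., keep the $+1$ inside the supremum rather than outside. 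This matches the convention $\varphi(0) = \sup_{x \in R^{\bullet}} \varphi(x) + 1$ under its intended reading, and with that adjustment your argument is complete.
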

\begin{proof} See \cite[$\S 4$]{Motzkin49} or Samuel \cite[Prop. 7]{Samuel71}.
\end{proof}


\subsection{The Quotient Euclidean Function}

\begin{thm}
\label{QUOTIENTPROP}
\label{QUOTIENTTHM}
\label{1.11}
Let $\varphi: R \ra \Ord$ be a Euclidean function.  Let $b \in R^{\bullet}$ be an ideal of $R$, 
and let $f: R \ra R/(b)$ be the quotient map.  For $x \in R/(b)$, let $\tilde{x} \in f^{-1}(x)$ be any element such that $\varphi(\tilde{x}) \leq \varphi(y)$ for all 
$y \in f^{-1}(x)$. \\
a)  Then $\varphi': x \in (R/(b))^{\bullet} \mapsto \varphi(\tilde{x})$ is a Euclidean function.  \\
b) For the bottom Euclidean function $\varphi_R$, we have 
\begin{equation}
\label{1.11EQ}
\varphi_R'(0) = \sup_{\overline{x} \neq \overline{0}} \varphi_R'(\overline{x}) +1 = \varphi_R(b). 
\end{equation}    
\end{thm}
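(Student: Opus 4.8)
The plan is to prove the single nontrivial identity $\sup_{\overline{x} \neq \overline{0}} \varphi_R'(\overline{x}) + 1 = \varphi_R(b)$, where by the extension-at-zero convention the middle term denotes the least ordinal strictly exceeding every value of $\varphi_R'$ on $(R/(b))^{\bullet}$, i.e. $\sup_{\overline{x} \neq \overline{0}}(\varphi_R'(\overline{x}) + 1)$; the first equality in (\ref{1.11EQ}) is then exactly this convention applied to the Euclidean function $\varphi_R'$ furnished by part a). Throughout I assume $(b)$ is a proper nonzero ideal (the unit case being degenerate), and I use that the bottom function $\varphi_R$ is isotone (Theorem \ref{BOTTOMTHM}) and that a minimal-value element of an ideal generates it (Proposition \ref{1.2}), so that all generators of $(b)$ share the common value $\varphi_R(b)$.

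First I would dispatch the inequality $\sup_{\overline{x} \neq \overline{0}}(\varphi_R'(\overline{x}) + 1) \leq \varphi_R(b)$ by one Euclidean division. Fix $\overline{x} \neq \overline{0}$ with minimal representative $\tilde{x}$, so $\tilde{x} \notin (b)$ and hence $b \nmid \tilde{x}$. Dividing, $\tilde{x} = qb + r$ with $r \neq 0$ (else $\overline{x} = \overline{0}$) and $\varphi_R(r) < \varphi_R(b)$; since $r \equiv \tilde{x} \pmod{(b)}$, the element $r$ represents $\overline{x}$, whence $\varphi_R'(\overline{x}) \leq \varphi_R(r) < \varphi_R(b)$. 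Thus $\varphi_R'(\overline{x}) + 1 \leq \varphi_R(b)$ for every nonzero coset, and the supremum obeys the same bound.

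The substance is the reverse inequality $\varphi_R(b) \leq \mu$, where $\mu := \sup_{\overline{x} \neq \overline{0}}(\varphi_R'(\overline{x}) + 1)$, and here I would exploit the minimality of $\varphi_R$. Supposing $\varphi_R(b) > \mu$, I define $\psi$ to agree with $\varphi_R$ except that $\psi(y) = \mu$ whenever $(y) = (b)$. By isotonicity every generator of $(b)$ has value $\varphi_R(b) > \mu$, so $\psi \leq \varphi_R$ with strict inequality at $b$; it then suffices to check that $\psi$ is Euclidean, contradicting that $\varphi_R$ is the bottom element. For a divisor $c$ with $(c) \neq (b)$ nothing is lost: $\psi(c) = \varphi_R(c)$, and any $\varphi_R$-good remainder $r$ still satisfies $\psi(r) \leq \varphi_R(r) < \varphi_R(c)$. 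For a divisor $c$ with $(c) = (b)$ and $a \notin (c)$, the coset $\overline{a}$ is nonzero, so $\varphi_R'(\overline{a}) + 1 \leq \mu$ forces $\varphi_R'(\overline{a}) < \mu$; taking $r$ to be the minimal representative of $\overline{a}$, which lies outside $(b)$ so that $\psi(r) = \varphi_R(r) = \varphi_R'(\overline{a})$, yields a remainder with $\psi(r) < \mu = \psi(c)$. Hence $\psi$ is Euclidean, the desired contradiction, and $\varphi_R(b) = \mu$.

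This establishes the second equality of (\ref{1.11EQ}); combined with the extension-at-zero definition of $\varphi_R'(0)$ it gives the full chain. As a consistency check one may evaluate the part a) recipe directly at $\overline{0}$: the fiber $f^{-1}(\overline{0})$ equals $(b)$, on which $\varphi_R$ is minimized at a generator by Proposition \ref{1.2} (the value $\varphi_R(0)$ being the largest of all), so the naive formula also returns $\varphi_R(b)$, matching the extension-at-zero value. I expect the construction and verification of the smaller function $\psi$ to be the main obstacle: the delicate points are that lowering the value of $b$ never destroys the division property for \emph{other} divisors (since $\psi \leq \varphi_R$ can only make remainder conditions easier) and that, for $b$ itself, the very definition of $\mu$ guarantees each nonzero coset still has a representative of value $< \mu$.
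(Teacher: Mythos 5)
Your treatment of part b) is correct, and it takes a genuinely different route from the paper: where the paper disposes of the second equality in (\ref{1.11EQ}) by simply citing \cite[Prop.~10]{Samuel71}, you give a self-contained minimality argument. Both halves of it check out. The bound $\mu := \sup_{\overline{x}\neq \overline{0}}(\varphi_R'(\overline{x})+1) \leq \varphi_R(b)$ follows, as you say, from one division by $b$ of a minimal representative; and for the reverse bound, your function $\psi$ (equal to $\varphi_R$ except lowered to $\mu$ on generators of $(b)$) really is Euclidean when $\mu < \varphi_R(b)$: divisors $c$ with $(c) \neq (b)$ keep their $\varphi_R$-divisions since $\psi \leq \varphi_R$, while for $(c) = (b)$ a minimal representative of a nonzero coset $\overline{a}$ is a legal remainder of $\psi$-value $\varphi_R'(\overline{a}) < \mu = \psi(c)$; this contradicts minimality of $\varphi_R$. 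Two small notes. First, your reading of the supremum as the least ordinal strictly exceeding all values on $(R/(b))^{\bullet}$ is the right one. Second, the fact that all generators of $(b)$ share the value $\varphi_R(b)$ -- which you need for $\psi \leq \varphi_R$ -- is not quite a consequence of Proposition \ref{1.2} plus ``isotone'' in the paper's sense (strict divisibility implies strict inequality says nothing about associates); the clean justification is that $\varphi_R = \underline{\varphi_R}$ (Theorem \ref{1.4}, Proposition \ref{BOTTOMISOPROP}) and $\underline{\varphi_R}$ by construction depends only on the ideal generated, or equivalently weak isotonicity via Corollary \ref{SAMUEL1COR}.

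The gap is part a): you never prove that $\varphi'$ is a Euclidean function on $R/(b)$ -- not for a general Euclidean $\varphi$, which is what the theorem asserts, and not even for $\varphi_R$, which your own first step invokes (``the convention applied to the Euclidean function $\varphi_R'$ furnished by part a)''). This is precisely the half that the paper does prove, and the missing argument is short: for $x \in R/(b)$ and $y \in (R/(b))^{\bullet}$, note $\tilde{y} \neq 0$ (the fiber of a nonzero coset avoids $0$), divide the minimal lifts in $R$ to get $\tilde{x} = q\tilde{y} + r$ with $r = 0$ or $\varphi(r) < \varphi(\tilde{y})$, and project: $x = f(q)y + f(r)$, where either $f(r) = 0$ or
\[ \varphi'(f(r)) \leq \varphi(r) < \varphi(\tilde{y}) = \varphi'(y), \]
the first inequality holding because $r$ lies in the fiber over $f(r)$ and $\varphi'(f(r))$ is the minimum of $\varphi$ over that fiber. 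With this two-line addition your writeup covers the full statement, and in fact proves more than the paper does in situ, since the paper's part b) is a citation.
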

\begin{proof}
a) For $x \in R/(b)$, $y \in (R/(b))^{\bullet}$, $\tilde{x} \in R$, $\tilde{y} \in R^{\bullet}$, so there are $q,r \in R$ with $\tilde{x} = q \tilde{y} + r$ and $\varphi(r) < \varphi(\tilde{y})$.  Then $x = f(q)y + f(r)$ and hence \[\varphi'(f(r)) \leq \varphi(r) < \varphi(\tilde{y}) = \varphi'(y). \]
b) The first equality in (\ref{1.11EQ}) is the definition of the extension to 
$0$ of any Euclidean function.  The second equality is a key -- in fact, characteristic -- property of the bottom Euclidean function which is proved in \cite[Prop. 10]{Samuel71}.
\end{proof}

\begin{cor}
\label{1.12}
If $R$ is Euclidean, so is every quotient ring $R'$, and $e(R') \leq e(R)$.
\end{cor}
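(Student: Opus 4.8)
The plan is to reduce immediately to the case of a quotient by a principal ideal and then apply the quotient construction of Theorem~\ref{QUOTIENTTHM}. Since a Euclidean ring is principal (Proposition~\ref{1.2}), any quotient $R'$ has the form $R/(b)$ for some $b \in R$. If $b = 0$ then $R' \cong R$ and there is nothing to prove; if $b \in R^{\times}$ then $R' = 0$ is the zero ring, which is (vacuously) Euclidean with $e(R') = 0 \leq e(R)$. So I may assume $b \in R^{\bullet}$, and for the existence of a Euclidean function on $R'$ I would simply apply Theorem~\ref{QUOTIENTTHM}a) to the bottom Euclidean function $\varphi_R$ of $R$: this directly produces a Euclidean function $\varphi_R' \colon R' \ra \Ord$, so $R'$ is Euclidean.

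For the bound on order types, let $\varphi_{R'}$ denote the bottom Euclidean function on $R'$, which exists by Theorem~\ref{BOTTOMTHM}a). Since $\varphi_R' \in \Euc(R')$ and $\varphi_{R'}$ is the bottom element of $\Euc(R') \subset \Ord^{R'}$, we have $\varphi_{R'} \leq \varphi_R'$ pointwise on all of $R'$. The key computation is then to check that every value of $\varphi_R'$ lies strictly below $e(R)$. For $x \neq 0$ we have $\varphi_R'(x) = \varphi_R(\tilde{x})$, which is an element of the ordinal $\varphi_R(R) = e(R)$ and hence $< e(R)$; and for $x = 0$, Theorem~\ref{QUOTIENTTHM}b) supplies $\varphi_R'(0) = \varphi_R(b) < e(R)$. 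Combining these, $\varphi_{R'}(x) \leq \varphi_R'(x) < e(R)$ for every $x \in R'$.

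To conclude, I would invoke Theorem~\ref{BOTTOMTHM}c): the set of values $e(R') = \varphi_{R'}(R')$ is itself an ordinal, i.e.\ downward closed. If we had $e(R') > e(R)$, then $e(R)$ would be a value $\varphi_{R'}(x)$ for some $x \in R'$, contradicting the bound just established; therefore $e(R') \leq e(R)$.

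The only step requiring genuine care is the bookkeeping at the zero element: one must apply the pointwise comparison $\varphi_{R'} \leq \varphi_R'$ and the identity $\varphi_R'(0) = \varphi_R(b)$ consistently with the extension-at-zero convention, so that the value at $0$ does not escape the bound. This is exactly what Theorem~\ref{QUOTIENTTHM}b) is designed to control, so I expect no real obstacle here once the degenerate cases $b = 0$ and $b$ a unit have been disposed of at the outset.
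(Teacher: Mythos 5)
Your proof is correct and follows essentially the same route as the paper, which states this corollary without proof as an immediate consequence of Theorem \ref{QUOTIENTTHM}: the quotient function $\varphi_R'$ witnesses that $R'$ is Euclidean, and the identity $\varphi_R'(0) = \varphi_R(b)$ together with the minimality of the bottom Euclidean function $\varphi_{R'}$ gives $e(R') \leq e(R)$. Your additional bookkeeping (reduction to $R' = R/(b)$ via principality of $R$, the degenerate cases $b = 0$ and $b \in R^{\times}$, and the use of downward-closedness of $\varphi_{R'}(R')$) simply makes explicit what the paper leaves implicit.
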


\subsection{The Product Theorem}

\begin{lemma}(Ordinal Subtraction)
\label{SUBTRACTLEMMA} \\
a) For $\alpha \leq \beta \in \Ord$, there is a unique $\gamma \in \Ord$ such that $\alpha + \gamma = \beta$.  We may therefore define 
\[ -\alpha + \beta = \gamma. \]
b) Suppose we have ordinals $\alpha,\beta,\gamma$ such that $\gamma \leq \alpha < \beta$.  Then $-\gamma + \alpha < -\gamma + \beta$.  
\end{lemma}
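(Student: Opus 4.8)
The plan is to prove (a) by the standard order-type construction and to obtain (b), together with the uniqueness half of (a), as consequences of the strict monotonicity of ordinal addition in its right argument. Throughout I regard each ordinal as the well-ordered set of its predecessors, so that $\alpha \leq \beta$ means that $\alpha$ is an initial segment of $\beta$, and I use that $\alpha + \gamma$ is by definition the order type of a copy of $\gamma$ placed above a copy of $\alpha$ (the convention fixed in the introduction).

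For existence in (a): given $\alpha \leq \beta$, I would consider the final segment $T = \{\xi \in \Ord : \alpha \leq \xi < \beta\}$ of $\beta$, well-ordered by the induced ordering, and let $\gamma$ be its order type. Since $\beta$ is the disjoint union of the initial segment $\{\xi < \alpha\}$ and $T$, with every element of the former preceding every element of the latter, the order type of $\beta$ is precisely that of a copy of $\gamma$ placed above a copy of $\alpha$; that is, $\alpha + \gamma = \beta$. (One could instead induct on $\beta$, treating the successor and limit cases separately, but the order-type argument avoids the bookkeeping in the limit step.)

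For uniqueness in (a) and for part (b), the key fact I would establish is that for fixed $\delta \in \Ord$ the map $\xi \mapsto \delta + \xi$ is strictly isotone: if $\xi < \xi'$, then the concatenation defining $\delta + \xi$ is a \emph{proper} initial segment of the one defining $\delta + \xi'$, and an ordinal strictly exceeds the order type of each of its proper initial segments, so $\delta + \xi < \delta + \xi'$. A strictly isotone map on a linearly ordered class is injective, giving uniqueness in (a): $\alpha + \gamma = \alpha + \gamma'$ forces $\gamma = \gamma'$. The same map also reflects order, since $\xi \geq \xi'$ would yield $\delta + \xi \geq \delta + \xi'$. For (b) I note that, by definition, $\gamma + (-\gamma + \alpha) = \alpha < \beta = \gamma + (-\gamma + \beta)$; applying order-reflection with $\delta = \gamma$ gives $-\gamma + \alpha < -\gamma + \beta$ immediately.

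The only genuinely delicate point, and the one I would flag, is the non-commutativity of ordinal addition: it is strictly monotone (and hence cancellative) only in its \emph{right} argument. Indeed $1 + \omega = 0 + \omega = \omega$, so $\xi \mapsto \xi + \delta$ is neither injective nor strictly isotone. It is therefore essential that the construction cancels $\gamma$ on the \emph{left}; this is exactly why the well-defined operation is $-\gamma + \beta$ rather than a right-hand subtraction, and why strict monotonicity in the right argument is the fact that must be invoked in both (a) and (b).
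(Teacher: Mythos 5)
Your proof is correct, and its overall shape is close to the paper's, with one genuine divergence. Existence in (a) is obtained exactly as in the paper: the paper takes $x_0$ to be the least element of $\beta \setminus \alpha$ and lets $\gamma$ be the order type of $\{x \in \beta : x \geq x_0\}$, which is precisely your final segment $\{\xi : \alpha \leq \xi < \beta\}$. Your argument for (b) is also the paper's, merely phrased directly rather than by contradiction: the paper assumes $-\gamma+\beta \leq -\gamma+\alpha$ and applies monotonicity of $\xi \mapsto \gamma+\xi$ to conclude $\beta \leq \alpha$, while you apply order-reflection of the same map to $\alpha < \beta$. Where you differ is uniqueness in (a): the paper deduces it from rigidity of well-orders --- the unique order-isomorphism $\alpha + W_1 \cong \alpha + W_2$ carries the initial copy of $\alpha$ to itself and hence induces $W_1 \cong W_2$ --- whereas you deduce it from left-cancellation, i.e.\ strict monotonicity of $\xi \mapsto \delta+\xi$, itself proved via the fact that a well-order strictly exceeds each of its proper initial segments in order type. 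Your route has a small structural advantage: the single monotonicity lemma powers uniqueness, order-reflection, and hence (b) all at once, and your closing observation that cancellation fails on the other side (e.g.\ $1+\omega = 0+\omega$) makes explicit why the operation must be $-\gamma+\beta$ and not a right-hand subtraction --- a point the paper leaves implicit.
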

\begin{proof} 
a) Existence of $\gamma$: If $\alpha = \beta$, then we take $\gamma = 0$.  Otherwise, 
$\alpha \subsetneq \beta$; let $x_0$ be the least element of $\beta \setminus \alpha$ and let $\gamma$ be the order type of $\{x \in \beta \ | \ x \geq x_0\}$. \\ 
Uniqueness of $\gamma$: suppose we have two well-ordered sets $W_1$ and $W_2$ 
such that $\alpha + W_1$ is order-isomorphic to $\alpha + W_2$.  Then the unique 
order-isomorphism between them induces an order-isomorphism from $W_1$ to $W_2$.  \\
b) For if not, $-\gamma + \beta \leq -\gamma + \alpha$, and then $\beta = \gamma + (-\gamma + \beta) \leq \gamma + (-\gamma + \alpha) = \alpha$.
\end{proof}

\begin{thm}(Product Theorem)
\label{PRODUCTTHM}
Let $R_1,\ldots,R_n$ be Euclidean rings.\\
a) The ring $\prod_{i=1}^n R_i$ is Euclidean iff $R_i$ is Euclidean for all $i$.  \\
b) If the equivalent conditions of part a) hold, then 
\begin{equation}
\label{PRODTHMEQ}
e(R_1) + \ldots + e(R_n) \leq e(\prod_{i=1}^n R_i) \leq e(R_1) \oplus \ldots 
\oplus e(R_n). 
\end{equation}
\end{thm}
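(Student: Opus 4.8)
The plan is to treat the two parts separately and, for part (b), to reduce at once to the case $n = 2$: the general lower bound follows from the binary one by induction together with the associativity and monotonicity of ordinary ordinal addition, and the general upper bound follows from the binary one by induction together with the associativity and monotonicity of the Hessenberg--Brookfield sum $\oplus$. For part (a), the implication ``each $R_i$ Euclidean $\Rightarrow \prod_i R_i$ Euclidean'' is exactly Lemma \ref{NAGATALEMMA}, applied inductively, followed by Corollary \ref{0.0.5}: the product $\varphi_{R_1} \times \cdots \times \varphi_{R_n}$ of the bottom Euclidean functions is a generalized Euclidean function with values in the Artinian ordered set $\prod_i \varphi_{R_i}(R_i)$. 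The converse is immediate from Corollary \ref{1.12}, since each $R_i$ is a quotient of $\prod_i R_i$ by the ideal generated by the complementary idempotent.

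For the binary upper bound $e(R_1 \times R_2) \le e(R_1) \oplus e(R_2)$, set $P = (e(R_1)+1) \times (e(R_2)+1)$ and let $\Phi = \varphi_{R_1} \times \varphi_{R_2}$, which maps $R_1 \times R_2$ into $P$ (each $\varphi_{R_i}$ takes its maximal value $e(R_i) = \varphi_{R_i}(0)$) and is generalized Euclidean by Lemma \ref{NAGATALEMMA}. Composing with the length function yields a genuine Euclidean function $\lambda_P \circ \Phi$ by Corollary \ref{0.0.5}, so by minimality of the bottom function (Theorem \ref{BOTTOMTHM}) we have $\varphi_{R_1 \times R_2} \le \lambda_P \circ \Phi$ on $(R_1 \times R_2)^{\bullet}$. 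The crucial observation is that $\varphi_{R_i}$ attains the value $e(R_i)$ \emph{only} at $0$, so for every nonzero $z$ the point $\Phi(z)$ lies strictly below the top element $(e(R_1), e(R_2))$ of $P$; as $\lambda_P$ is isotone this forces $\varphi_{R_1 \times R_2}(z) \le \lambda_P(\Phi(z)) < \len(P)$. Since $e(R_1 \times R_2) = \varphi_{R_1 \times R_2}(0)$ is the least ordinal exceeding all these values, we get $e(R_1 \times R_2) \le \len(P)$, and $\len(P) = e(R_1) \oplus e(R_2)$ by the definition of the Brookfield sum together with Theorem \ref{0.0.3}.

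The binary lower bound $e(R_1) + e(R_2) \le e(R_1 \times R_2)$ is where the real work lies, and where Lemma \ref{SUBTRACTLEMMA} enters. Write $\varphi = \varphi_{R_1 \times R_2}$. Identifying $R_1$ with $(R_1 \times R_2)/\langle (0,1) \rangle$ and applying the Quotient Theorem \ref{QUOTIENTTHM}(b) with $b = (0,1)$, the descended function $\varphi'$ satisfies $\varphi'(0) = \varphi(0,1)$; since $\varphi'$ is Euclidean and $\varphi_{R_1}$ is minimal, $\varphi(0,1) = \varphi'(0) \ge \varphi_{R_1}(0) = e(R_1)$. Next, since $(0,1)$ divides $(0,b_2)$ for every $b_2$, isotonicity of $\varphi$ gives $\varphi(0,1) \le \varphi(0,b_2)$, so ordinal subtraction (Lemma \ref{SUBTRACTLEMMA}(a)) lets us define
\[ \psi \colon b_2 \in R_2^{\bullet} \mapsto -\varphi(0,1) + \varphi(0,b_2). \]
I would then verify that $\psi$ is a Euclidean function on $R_2$: the Euclidean property of $\varphi$ applied to the pair $(0,a_2), (0,b_2)$ produces a division $a_2 = q_2 b_2 + r_2$ in $R_2$ whose remainder satisfies $\varphi(0,r_2) < \varphi(0,b_2)$, and Lemma \ref{SUBTRACTLEMMA}(b) converts this into $\psi(r_2) < \psi(b_2)$. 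Minimality then gives $\psi \ge \varphi_{R_2}$, so the values $\psi(b_2)$ are cofinal in $e(R_2)$; since $\varphi(0,b_2) = \varphi(0,1) + \psi(b_2)$ and left addition is continuous, the values $\varphi(0,b_2)$ are cofinal in $\varphi(0,1) + e(R_2) \ge e(R_1) + e(R_2)$. As each $(0,b_2)$ is nonzero, $e(R_1 \times R_2) = \varphi(0,0)$ is an upper bound for all of them, whence $e(R_1 \times R_2) \ge e(R_1) + e(R_2)$. The main obstacle is precisely the verification that $\psi$ is Euclidean—checking that the division step for $\varphi$ on the ``slice'' $\{0\} \times R_2$ descends to a valid division in $R_2$ and that the subtraction bookkeeping preserves the strict inequality—together with the small continuity fact that $-\gamma + (\cdot)$ commutes with suprema.
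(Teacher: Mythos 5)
Your proposal is correct and follows essentially the same route as the paper: part a) via Lemma \ref{NAGATALEMMA}, Corollary \ref{0.0.5} and the Quotient Theorem; the upper bound by composing $\varphi_{R_1} \times \varphi_{R_2}$ with the length function of $(e(R_1)+1)\times(e(R_2)+1)$; and the lower bound by applying Theorem \ref{QUOTIENTTHM}b) at $b=(0,1)$, defining $\psi(y) = -\varphi_R((0,1)) + \varphi_R((0,y))$ via Lemma \ref{SUBTRACTLEMMA}, checking $\psi$ is Euclidean, and invoking minimality of $\varphi_{R_2}$. The only (harmless) deviations are that you use $\varphi_R((0,1)) \geq e(R_1)$ where the paper asserts equality, and you finish with a cofinality argument in place of the paper's evaluation of $\psi$ at $0$.
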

\begin{proof}
Induction reduces us to the case $n = 2$.  Put 
$R = R_1 \times R_2$.  \\
a) This is immediate from Lemma \ref{NAGATALEMMA}, Corollary \ref{0.0.5} and 
Theorem \ref{QUOTIENTTHM}.  \\
b) Let $b = (0,1)$, so $R/(b) = R_1$ and thus by Theorem \ref{QUOTIENTTHM}b), we 
have $\varphi_R(b) = e(R_1)$.  For $y \in R_2$, $b = (0,1) \ | \ (0,y)$, 
so by Proposition \ref{BOTTOMISOPROP}, $\varphi_R((0,1)) \leq \varphi_R((0,y))$.  
By Lemma \ref{SUBTRACTLEMMA} we may put 
\[ \psi(y) = - \varphi_R((0,1)) + \varphi_R((0,y)). \]
We {\sc claim} $\psi: R_2 \ra \Ord$ is a Euclidean function.  Granting this 
for the moment, it then follows that $\psi \geq \varphi_{R_2}$, so 
\[ e(R) = \varphi_R((0,0)) = \varphi_R((0,1)) + \psi(0) \geq e(R_1) + e(R_2). \]
{\sc proof of claim}: Let $x \in R_2, \ y \in R_2^{\bullet}$; as usual, we may assume $y \nmid x$.  Since $\varphi_R$ is Euclidean, there are $q = (q_1,q_2), \ r = (r_1,r_2) \in R$ such that 
$(0,x) = q (0,y) + r = (r_1,q_2 y + r_2)$ and either $r = 0$ or 
$\varphi_R(r) < \varphi_R((0,y))$.  Thus $r_1 = 0$ and $x = q_2 y + r_2$.  Since 
$y \nmid x$ we have $r_2 \neq 0$, so $r \neq 0$ and thus $\varphi_R((0,r_2)) < 
\varphi_R((0,y))$.  By Lemma \ref{SUBTRACTLEMMA}b) we may subtract $\varphi_R(0,1)$ -- on the left! -- from both sides to get 
\[ \psi(r_2) = -\varphi_R((0,l)) - \varphi_R((0,r_2)) < - \varphi_R((0,1)) - 
\varphi_R((0,y)) = \psi(y). \]
For the second inequality of (\ref{PRODTHMEQ}), let $\varphi_1: R_1 \ra e(R_1)$, $\varphi_2: R_2 \ra e(R_2)$ be the bottom 
Euclidean functions on $R_1$ and $R_2$.  By definition of the Hessenberg-Brookfield sum, we have an isotone map $\lambda: (e(R_1) + 1) \times (e(R_2) + 1) \ra e(R_1) \oplus e(R_2)$ and thus a Euclidean function 
$\lambda \circ (\varphi_1 \times \varphi_2): R \ra e(R_1) \oplus e(R_2)$. 
\end{proof} 
\noindent
Remark 2.7: The upper bound on the order type of the product ring in (\ref{PRODTHMEQ}) is essentially due to Nagata.  Moreover, in the proof of Proposition 6 of \cite{Samuel71}, Samuel gives the bound 
$e(R_1 \times R_2) \leq e(R_1) \times e(R_2) + e(R_2) \times e(R_1)$.  By Proposition \ref{HESSENBERGPROP}b), Samuel's bound is not as good as Nagata's: 
there is a penalty to pay for employing the ``usual'' ordinal operations rather than the Hessenberg-Brookfield sum.

\subsection{Applications}
\textbf{} \\ \\ \noindent
Let $R$ be a nonzero local principal ring with maximal ideal $(\pi)$.  Then every element $x \in R^{\bullet}$ may be written as $u \pi^a$ for
$u \in R^{\times}$ and a unique $a \in \N$.  If $R$ is a domain, then we take the convention that $0 = 1 \cdot \pi^{\omega}$.  Otherwise $R$ is Artinian 
and there is a least positive integer $a$ such that $0 = 1 \cdot \pi^a$; 
this $a$ is nothing else than $\ell(R)$, the \textbf{length} of $R$ as an $R$-module.  

\begin{thm}
\label{7.1}
a) (Samuel) Let $R$ be a local principal ring with maximal ideal generated by $\pi$.  
The map $\varphi: R \ra \Ord$ by $u \pi^a \mapsto a$ is a Euclidean function on $R$.  \\
b) In fact $\varphi = \varphi_R$ is the bottom Euclidean function, hence: \\
$\bullet$ If $R$ is a domain, $e(R) = \omega$.  \\
$\bullet$ If $R$ is Artinian, then $e(R) = \ell(R)$.
\end{thm}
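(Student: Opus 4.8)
The plan is to treat the two parts separately, relying throughout on the structural facts recalled just above the statement: every $x \in R^{\bullet}$ is uniquely of the form $u\pi^{a}$ with $u \in R^{\times}$ and $a \in \N$, and since the principal ideals $(\pi^{a})$ form a strictly decreasing chain for as long as $\pi^{a} \neq 0$, one has $v\pi^{b} \mid u\pi^{a}$ if and only if $b \leq a$. For part a) I would verify the Euclidean property of $\varphi\colon u\pi^{a} \mapsto a$ directly. Given $x \in R$ and $y = v\pi^{b} \in R^{\bullet}$, there are two cases. If $y \mid x$, take $r = 0$. If $y \nmid x$, then $x \neq 0$ (as $y$ divides $0$); writing $x = u\pi^{a}$, the divisibility criterion forces $a < b$, so $q = 0$, $r = x$ gives $x = qy + r$ with $\varphi(r) = a < b = \varphi(y)$. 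This handles the domain and Artinian cases at once, the only point needing care being that in the Artinian case $b < \ell(R)$, so that $(\pi^{a})$ and $(\pi^{b})$ are genuinely distinct.

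For part b) I would show $\varphi = \varphi_{R}$. One inequality is automatic: since $\varphi_{R}$ is the bottom Euclidean function (Theorem \ref{BOTTOMTHM}) and $\varphi$ is Euclidean by part a), we get $\varphi_{R} \leq \varphi$, whence $\varphi_{R}(1) \leq \varphi(1) = 0$ and $\varphi_{R}(\pi^{a}) \leq \varphi(\pi^{a}) = a$. For the reverse inequality I would invoke that $\varphi_{R}$ is isotone (Proposition \ref{BOTTOMISOPROP}). For each $a$, the element $\pi^{a}$ strictly divides $\pi^{a+1}$ (they are non-associate because the chain of ideals is strict), so $\varphi_{R}(\pi^{a}) < \varphi_{R}(\pi^{a+1})$; starting from $\varphi_{R}(1) = 0$, induction on $a$ gives $\varphi_{R}(\pi^{a}) \geq a$, and hence $\varphi_{R}(\pi^{a}) = a$. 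Finally, an isotone — equivalently weakly isotone, by Corollary \ref{SAMUEL1COR} — Euclidean function is constant on associate classes, since $(u\pi^{a}) = (\pi^{a})$ forces $\varphi_{R}(u\pi^{a}) \leq \varphi_{R}(\pi^{a})$ and conversely. Thus $\varphi_{R}(u\pi^{a}) = \varphi_{R}(\pi^{a}) = a = \varphi(u\pi^{a})$, i.e. $\varphi = \varphi_{R}$.

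The numerical consequences then fall out by reading off the set of values $\{\varphi_{R}(x) : x \in R^{\bullet}\}$, which is downward closed and hence an ordinal, namely $\varphi_{R}(0) = \sup_{x \in R^{\bullet}}(\varphi_{R}(x) + 1) = e(R)$. In the domain case $a$ ranges over all of $\N$, so this set is $\omega$ and $e(R) = \omega$. In the Artinian case $a$ ranges over $\{0, 1, \ldots, \ell(R) - 1\}$, so the set is the ordinal $\ell(R)$ and $e(R) = \ell(R)$.

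I expect no serious obstacle: the whole content is the identification of $\varphi$ with the bottom function. The one spot demanding attention is the lower bound $\varphi_{R}(\pi^{a}) \geq a$, where isotonicity must be coupled with the strictness of the chain $(\pi^{a}) \supsetneq (\pi^{a+1})$ — which in the Artinian case persists only up to $a = \ell(R) - 1$, exactly the range over which the bound is needed.
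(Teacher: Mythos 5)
Your proposal is correct and follows essentially the same route as the paper: part a) by the same case split ($y \mid x$ versus $q=0$, $r=x$ when $a < b$), and part b) by using isotonicity of the bottom function $\varphi_R$ along the strict chain $(\pi^0) \supsetneq (\pi^1) \supsetneq \cdots$ to force $\varphi_R(\pi^a) \geq a$ and hence $\varphi_R = \varphi$, then reading off $e(R)$ from the extension at zero. Your explicit treatment of associates via Corollary \ref{SAMUEL1COR} just makes precise a step the paper leaves implicit.
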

\begin{proof}
a) Let $x = u_1 \pi^a \in R$, $y = u_2 \pi^b \in R^{\bullet}$.  If $b \leq a$ then $y \ | \ x$ and we 
may write $x = qy + 0$.  If $b > a$ we may write $x = 0 \cdot y + x$, and then 
$\varphi(x) < \varphi(y)$.  \\
b) Let $\varphi_R$ be the bottom Euclidean function on $R$, which is isotone by Proposition \ref{BOTTOMISOPROP}. Suppose first that $R$ is a domain.  Then for all $a \in \omega$ we must have 
 \[ 0 = \varphi_R( \pi^0) < \varphi_R(\pi^1) < \ldots < \varphi_R(\pi^a) < \ldots, \]
and thus $\varphi \leq \varphi_R$, so $\varphi_R = \varphi$ and \[e(R) = 
\varphi(0) = \sup_{a \in \N} \varphi(\pi^a) + 1 = \omega. \]
Similarly, if $R$ is Artinian then 
\[ 0 = \varphi_R(\pi^0) < \varphi_R(\pi^1) < \ldots < \varphi_R(\pi^{\ell(R)}), \]
and as above this forces $\varphi_R = \varphi$ and $e(R) = \varphi(0) = \ell(R)$.
\end{proof}

\begin{cor}
\label{FLETCHERCOR1}
If $R$ is an Artinian principal ring, then $R$ is Euclidean and $e(R) = \ell(R)$, the length of $R$ as an $R$-module.  
\end{cor}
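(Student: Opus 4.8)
The plan is to reduce to the local case already settled in Theorem \ref{7.1} and then reassemble the ring via the Product Theorem, exploiting the fact that the two-sided bound (\ref{PRODTHMEQ}) collapses to an equality whenever all the factor rings have \emph{finite} order type.

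First I would invoke Theorem \ref{ZSTHM}c) to write $R \cong \prod_{i=1}^n R_i$, where each $R_i$ is a local Artinian principal ring. By Theorem \ref{7.1}b), each $R_i$ is Euclidean with $e(R_i) = \ell(R_i)$; and since an Artinian ring has finite length, each $e(R_i)$ is a finite ordinal. The Product Theorem \ref{PRODUCTTHM}a) then shows that $R$ itself is Euclidean, and part b) sandwiches its order type:
\[ e(R_1) + \cdots + e(R_n) \;\leq\; e(R) \;\leq\; e(R_1) \oplus \cdots \oplus e(R_n). \]

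Now comes the decisive point. Because each $e(R_i) = \ell(R_i)$ is a natural number, Proposition \ref{HESSENBERGPROP}a), applied iteratively, shows that the Hessenberg-Brookfield sum on the right agrees with the ordinary ordinal sum on the left; both are simply the natural number $\ell(R_1) + \cdots + \ell(R_n)$, the order of summation being irrelevant. The squeeze therefore forces $e(R) = \sum_{i=1}^n \ell(R_i)$. To finish, I would use that module length is additive over finite direct products: as an $R$-module, $R$ is the direct sum of the $R_i$, and since $R$ acts on each $R_i$ through its projection, an $R$-composition series of $R_i$ is the same as an $R_i$-composition series, whence $\ell(R) = \sum_{i=1}^n \ell(R_i)$. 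Combining the two displays gives $e(R) = \ell(R)$.

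I do not expect a genuine obstacle here: the only real content beyond bookkeeping is the collapse of the sandwich, and this is automatic once one observes that $+$ and $\oplus$ coincide on finite ordinals. Indeed, the finiteness of the $e(R_i)$ is precisely what makes the Product Theorem's bounds sharp, so the corollary is essentially a formal consequence of the local computation of Theorem \ref{7.1} together with the additivity of length.
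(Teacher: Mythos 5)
Your proposal is correct and follows the same route as the paper, which simply cites Theorem \ref{7.1} together with the Product Theorem \ref{PRODUCTTHM}: decompose $R$ into local Artinian principal factors via Theorem \ref{ZSTHM}c), apply the local computation, and collapse the sandwich in (\ref{PRODTHMEQ}) using the fact that $+$ and $\oplus$ agree on finite ordinals. You have merely written out explicitly the steps the paper leaves implicit, including the additivity of length over the product, so there is nothing to correct.
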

\begin{proof} This follows immediately from Theorem \ref{7.1} and Theorem \ref{PRODUCTTHM}.
\end{proof}

\begin{thm}
\label{FLETCHERTHM}
Let $R = \prod_{i=1}^r R_i \times A = R' \times A$ be a Euclidean ring.  \\
a) We have $e(R) = e(R') + \ell(A)$.  \\
b) $e(R')$ is a limit ordinal (possibly zero).  \\
c) $e(R') \geq r \omega$.
\end{thm}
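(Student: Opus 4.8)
My plan is to dispatch parts (a) and (c) quickly from the Product Theorem and to reserve a direct argument for part (b). For part (a), the Artinian part $A$ is Artinian principal, hence Euclidean with $e(A) = \ell(A) < \omega$ by Corollary \ref{FLETCHERCOR1}, while $R'$ is Euclidean as a factor of the Euclidean ring $R$. Applying the Product Theorem \ref{PRODUCTTHM}(b) to $R = R' \times A$ gives
\[ e(R') + \ell(A) \leq e(R) \leq e(R') \oplus \ell(A). \]
Because $\ell(A)$ is finite, Proposition \ref{HESSENBERGPROP}(a) collapses the right-hand Hessenberg--Brookfield sum to the ordinary sum, $e(R') \oplus \ell(A) = e(R') + \ell(A)$, so the outer bounds coincide and $e(R) = e(R') + \ell(A)$.

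For part (c), I would first note that each factor $R_i$ is a principal ideal domain that is not a field: a field is an Artinian domain and is therefore absorbed into $A$ in the canonical decomposition. Consequently $e(R_i) \geq \omega$, for if $\pi \in R_i$ is a nonzero nonunit then $(\pi^0) \supsetneq (\pi^1) \supsetneq \cdots$ is strictly decreasing, so isotonicity of the bottom Euclidean function (Proposition \ref{BOTTOMISOPROP}) forces $\varphi_{R_i}(\pi^0) < \varphi_{R_i}(\pi^1) < \cdots$ and hence $e(R_i) \geq \omega$. The lower bound of the Product Theorem \ref{PRODUCTTHM}(b), combined with monotonicity of ordinal addition and the paper's convention $r\omega = \omega + \cdots + \omega$ ($r$ summands), then yields
\[ e(R') \geq e(R_1) + \cdots + e(R_r) \geq \omega + \cdots + \omega = r\omega, \]
the case $r = 0$ being vacuous.

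The substance, and the step I expect to be the main obstacle, is part (b). Here $e(R')$ is the order type of $\varphi_{R'}((R')^{\bullet})$, which by Theorem \ref{BOTTOMTHM}(c) is a downward-closed set of ordinals and hence an ordinal; it is a limit (or $0$) exactly when this value set has no largest element, i.e. when $\varphi_{R'}$ attains no maximum on $(R')^{\bullet}$. The task therefore reduces to producing, for every nonzero $x = (x_1, \ldots, x_r) \in R'$, a nonzero $y$ with $\varphi_{R'}(y) > \varphi_{R'}(x)$: I would pick a coordinate $j$ with $x_j \neq 0$ and a nonzero nonunit $\pi_j \in R_j$ (which exists since $R_j$ is not a field), and take $y$ to agree with $x$ except in its $j$th coordinate, which I set to $x_j \pi_j$. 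Then $y \neq 0$, $x \mid y$, and $(y) \subsetneq (x)$ since the $j$th coordinate ideal strictly shrinks, so $x$ strictly divides $y$ and isotonicity gives $\varphi_{R'}(x) < \varphi_{R'}(y)$. The only delicacy is the bookkeeping: one must allow several coordinates of $x$ to vanish (hence locate a nonzero coordinate rather than scale a fixed one), and one must convert ``no maximal value on nonzero elements'' into ``limit ordinal'', observing that were the order type a successor $\gamma + 1$ then $\gamma$ would be a largest value. Beyond the Product Theorem and isotonicity of the bottom Euclidean function, no further machinery is needed.
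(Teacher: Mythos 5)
Your proposal is correct and follows essentially the same route as the paper: part (a) from the Product Theorem plus the collapse of the Hessenberg--Brookfield sum to the ordinary sum when one summand is finite, part (b) from the fact that a product of non-field domains has no minimal nonzero ideal (so $\varphi_{R'}$ attains no maximum and the order type cannot be a successor), and part (c) from $e(R_i) \geq \omega$ combined with the Product Theorem's lower bound. The only cosmetic difference is that the paper obtains $e(R_i) \geq \omega$ in (c) by invoking part (b) for each factor, whereas you re-derive it directly from the chain $(\pi^0) \supsetneq (\pi^1) \supsetneq \cdots$ and isotonicity.
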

\begin{proof}
a) By Corollary \ref{FLETCHERCOR1}, $A$ is Euclidean with $e(A) = \ell(A)$;  then by Theorem 
\ref{PRODUCTTHM}, \[e(R') + \ell(A) = e(R') + e(A) \leq e(R) \leq e(R') \oplus e(A) = 
e(R') + e(A) = e(R') + \ell(A). \]
The remaining assertions hold trivially if $r = 0$, so we assume $r \geq 1$.  \\
b) Since $R'$ is a product of domains, the set of nonzero ideals of $R$ has no minimal element, so $e(R')$ is a nonzero limit ordinal.  \\
c) By part b) and Theorem \ref{PRODUCTTHM}, $r \omega \leq e(R_1) + \ldots + e(R_r) \leq e(R')$.
\end{proof}

\begin{cor}(Fletcher)
\label{FLETCHERCOR2}
A Euclidean ring $R$ with $e(R) = \omega$ is a domain.
\end{cor}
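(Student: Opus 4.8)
The plan is to deduce the statement from the structural refinement in Theorem \ref{FLETCHERTHM} together with the decomposition furnished by Theorem \ref{ZSTHM}. First I would write $R = R' \times A$, where $R' = \prod_{i=1}^r R_i$ is the product of the principal-ideal-domain factors and $A$ is the Artinian part. By Theorem \ref{FLETCHERTHM}a) we then have $e(R) = e(R') + \ell(A)$, where $e(R')$ is a limit ordinal (possibly zero) by part b) and $\ell(A) \in \N$ is finite.

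The first real step is to rule out $e(R') = 0$. By Theorem \ref{FLETCHERTHM}c) we have $e(R') \geq r\omega$, so if $r \geq 1$ then $e(R') \geq \omega > 0$; contrapositively, $e(R') = 0$ forces $r = 0$, whence $R = A$ is Artinian and $e(R) = \ell(A) < \omega$ by Corollary \ref{1.10}a) — contradicting $e(R) = \omega$. Hence $e(R')$ is a \emph{nonzero} limit ordinal and therefore $e(R') \geq \omega$.

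Next I would pin down the decomposition using the hypothesis $e(R') + \ell(A) = \omega$. Since $e(R') \geq \omega$ while $\ell(A)$ is finite, ordinal addition forces $e(R') = \omega$ and $\ell(A) = 0$: if $e(R') > \omega$ then the sum already exceeds $\omega$, and $\omega + n = \omega$ holds only for $n = 0$. Thus $A$ is the zero ring and $R = R' = \prod_{i=1}^r R_i$. Finally, reapplying Theorem \ref{FLETCHERTHM}c) gives $r\omega \leq e(R') = \omega$; since $2\omega = \omega + \omega > \omega$ under the conventions fixed at the outset, this forces $r \leq 1$, and because $e(R') \neq 0$ we conclude $r = 1$. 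Therefore $R = R_1$ is a principal ideal domain, and in particular a domain.

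The only delicate point throughout is the ordinal-arithmetic bookkeeping: one must remember that a nonzero limit ordinal is at least $\omega$, that right-addition of a finite ordinal $n$ to $\omega$ is strict unless $n = 0$, and that $r\omega > \omega$ for $r \geq 2$. None of these is hard, but each uses the non-commutativity of ordinal sum and product in an essential way, so I would state them explicitly rather than treat them as automatic.
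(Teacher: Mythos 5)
Your proof is correct: every step, including the ordinal-arithmetic bookkeeping, is valid under the paper's stated conventions, and it is essentially the deduction the paper leaves implicit, since Corollary \ref{FLETCHERCOR2} is stated immediately after Theorem \ref{FLETCHERTHM} with no proof attached. It is worth knowing, though, that the paper proves this identical statement earlier, as Corollary \ref{1.10}b), by a genuinely different and more elementary route: Theorem \ref{NOZVALUEDTHM} shows that $e(R) \leq \omega$ forces $R/(x)$ to be Artinian for every $x \in R^{\bullet}$ (an infinite strictly descending chain of principal ideals would give an infinite strictly descending chain of values of the isotone function $\varphi_R$ below $\omega$), whence by the structure theorem (Theorem \ref{ZSTHM}) $R$ is either a PID or an Artinian principal ring; the Artinian case is then excluded because such a ring has only finitely many ideals, so that $e(R) < \omega$ by Corollary \ref{1.10}a). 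That argument needs neither the Product Theorem nor the decomposition $R \cong R' \times A$, whereas your route rests on Theorem \ref{FLETCHERTHM}, which in turn rests on Theorem \ref{PRODUCTTHM} --- so yours sits higher in the paper's logical dependency chain. What your route buys in exchange is that it pins down the Zariski--Samuel decomposition completely ($r = 1$, $A = 0$, $e(R_1) = \omega$), which is the natural point of view for the small-ring structure theory culminating in Theorem \ref{SMALLSTRUCTURETHM}; the chain-condition argument of Corollary \ref{1.10} reaches the conclusion more cheaply but yields only the dichotomy.
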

\noindent
We say a Euclidean ring $R \cong \prod_{i=1}^r R_i \times A(R)$ is \textbf{small} if $e(R_i) = \omega$ for all $i$; otherwise we say $R$ is \textbf{large}.  

\begin{thm}
\label{SMALLSTRUCTURETHM}
a) Let $R \cong \prod_{i=1}^r R_i \times A$ be a small Euclidean ring.  Then 
\[ e(R) = r \omega + \ell(A), \]
where $n$ is the \emph{length} of the Artinian principal ring $A$.  \\
b) For every ordinal $\alpha < \omega^2$, there is a small Euclidean ring 
$R$ with $e(R) = \alpha$.   
\end{thm}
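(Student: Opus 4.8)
The plan is to deduce part a) from the two results that already separate the domain part from the Artinian part, and then to read off part b) as an explicit construction.

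For part a), I would first apply Theorem \ref{FLETCHERTHM}a), which gives $e(R) = e(R') + \ell(A)$ for the domain factor $R' = \prod_{i=1}^r R_i$; this reduces the claim to the single identity $e(R') = r\omega$. The lower bound $e(R') \geq r\omega$ is already recorded in Theorem \ref{FLETCHERTHM}c), but it also follows immediately from the Product Theorem (Theorem \ref{PRODUCTTHM}b): since $R$ is small, $e(R_i) = \omega$ for every $i$, so the left-hand side of its inequality is $\omega + \cdots + \omega$ with $r$ summands, which is $r\omega$ by the paper's convention on ordinal addition. For the reverse direction the Product Theorem supplies $e(R') \leq e(R_1) \oplus \cdots \oplus e(R_r)$, the $r$-fold Hessenberg--Brookfield sum of $\omega$ with itself.

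The only genuine computation, and the main (though mild) obstacle, is to check that this natural sum collapses to the ordinary sum, i.e. that the $r$-fold $\oplus$-sum of $\omega$ equals $r\omega$. This is immediate from the Cantor normal form description of $\oplus_H$: every summand is $\omega = 1 \cdot \omega^1$, so all share the single exponent $\gamma_1 = 1$ with coefficient $1$, and their natural sum has coefficient $r$ at $\omega^1$, namely $r\omega^1 = r\omega$. Hence the upper and lower bounds coincide, forcing $e(R') = r\omega$ and therefore $e(R) = r\omega + \ell(A)$, which proves a).

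For part b), given $\alpha < \omega^2$ I would use ordinal division by $\omega$ to write $\alpha = r\omega + s$ with $r,s \in \N$, where $r < \omega$ precisely because $\alpha < \omega^2$. I then realize $\alpha$ by the explicit small ring $R = \prod_{i=1}^r \Z \times A$: each domain factor is $\Z$, a PID with $e(\Z) = \omega$ by Example 2.5, and the Artinian part is $A = \Z/p^{s}\Z$ for a prime $p$ when $s \geq 1$ (with no Artinian factor when $s = 0$). This is already the canonical decomposition of Theorem \ref{ZSTHM}b, since $\Z$ is a PID and $\Z/p^{s}\Z$ is a local Artinian principal ring of length $s$; by Corollary \ref{FLETCHERCOR1} the latter is Euclidean with $e(A) = \ell(A) = s$. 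Because every domain factor has order type $\omega$, the ring $R$ is small, so part a) applies and yields $e(R) = r\omega + \ell(A) = r\omega + s = \alpha$, as required (the boundary case $\alpha = 0$ being trivial). This exhibits a small Euclidean ring of order type $\alpha$ for every $\alpha < \omega^2$.
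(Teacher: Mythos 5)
Your proposal is correct and takes essentially the same approach as the paper: both arguments squeeze $e(R)$ between the Product Theorem's ordinary-sum lower bound and its Hessenberg--Brookfield upper bound and observe that the two collapse to the common value $r\omega + \ell(A)$, with your detour through Theorem \ref{FLETCHERTHM}a) merely repackaging that same squeeze (the paper proves that theorem by the identical computation). In part b) your ring $\Z^r \times \Z/p^s\Z$ plays exactly the role of the paper's $\C[t]^r \times \C[t]/(t^n)$.
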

\begin{proof}
a) By Theorem \ref{FLETCHERTHM}, $e(A) = \ell(A) < \omega$.  Now we apply the 
Product Theorem: 
\[ r \omega + \ell(A) = e_1(R) + \ldots + e_r(R) + e(A) \leq e(R) \leq  \bigoplus_{i=1}^r e(R_i) \oplus \ell(A) = r \omega + \ell(A). \]
b) The ordinals less than $\omega^2$ are of the form $r \omega + n$ for $r,n \in \omega$.  By part a),
\[e(\C[t]^r \times \C[t]/(t^n)) = r \omega + n. \]
\end{proof}
\noindent
Are all Euclidean rings small?  If so, Theorem \ref{SMALLSTRUCTURETHM} would be the ultimate result on Euclidean order types.  This question was implicit in \cite{Motzkin49} and made explicit in 
\cite{Samuel71}.  It was later answered \emph{negatively} by Hiblot \cite{Hiblot75}, \cite{Hiblot77} and 
Nagata \cite{Nagata78}.  It seems that the Euclidean order type of these large Euclidean domains has never been investigated...and, alas, will not investigated here.  However, the following result shows that 
Euclidean rings of the sort familiar in number theory are small.

\begin{prop}(\cite[Prop. 15]{Samuel71})
Let $R$ be a Euclidean domain such that $R/(a)$ is a finite ring for all $a \in R^{\bullet}$.  Then $R$ is small.
\end{prop}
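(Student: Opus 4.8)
The plan is to reduce the statement to the single assertion $e(R)=\omega$ and then prove this by showing that finiteness of the quotients prevents the bottom Euclidean function from ever taking a limit value. First I would record the reduction: since $R$ is a domain, the structure decomposition $R\cong \prod_{i=1}^r R_i\times A$ of Theorem \ref{FLETCHERTHM} degenerates. If $R$ is a field it is the (trivial) Artinian part with $r=0$, so it is vacuously small; otherwise $R$ is a one-dimensional PID, so $r=1$, $R_1=R$ and $A=0$, and smallness is by definition equivalent to $e(R)=\omega$. Thus I may assume $R$ is not a field and it suffices to prove $e(R)=\omega$.

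For the lower bound $e(R)\geq \omega$ I would choose a prime element $\pi$ (available since $R$ is a non-field PID) and use that $\varphi_R$ is isotone (Proposition \ref{BOTTOMISOPROP}): from the strict divisibilities $1\mid \pi\mid\cdots\mid \pi^n$ one gets $0=\varphi_R(1)<\varphi_R(\pi)<\cdots<\varphi_R(\pi^n)$, so $\varphi_R(\pi^n)\geq n$ and hence $e(R)\geq \omega$.

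The heart of the argument is the upper bound $e(R)\leq \omega$, and here the finiteness hypothesis enters. For a non-unit $x\in R^{\bullet}$ the ring $R/(x)$ is finite, so its set of nonzero classes is finite and nonempty. Applying the characteristic property of the bottom function in Theorem \ref{1.11}(b), I get $\varphi_R(x)=\bigl(\sup_{\overline a\neq 0}\varphi_R'(\overline a)\bigr)+1$, where the supremum runs over the \emph{finite} nonempty set $(R/(x))^{\bullet}$ and is therefore attained; consequently $\varphi_R(x)$ is a successor ordinal. Since $\varphi_R$ vanishes exactly on the units, every value of $\varphi_R$ on $R^{\bullet}$ is either $0$ or a successor, so $\varphi_R$ never attains a limit ordinal. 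By Theorem \ref{BOTTOMTHM}(c) the image $\varphi_R(R^{\bullet})$ is downward closed; a downward-closed set of ordinals omitting every limit ordinal must be contained in $\omega$ (otherwise it would contain some $\beta\geq\omega$ and hence, being downward closed, the limit ordinal $\omega$ itself). Thus $\varphi_R(R^{\bullet})\subseteq\omega$, so $e(R)\leq \omega$, and together with the lower bound this gives $e(R)=\omega$, whence $R$ is small.

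The step I expect to be the crux is isolating \emph{why} mere Artinian-ness of the quotients does not suffice and genuine finiteness is needed: for any PID every quotient $R/(x)$ is already Artinian, yet the large Euclidean domains of Hiblot and Nagata have $e(R)>\omega$. The decisive point is that finiteness turns the ``$\sup{}+1$'' of Theorem \ref{1.11}(b) into a \emph{maximum plus one}, i.e.\ a successor, thereby excluding limit values; getting this successor/limit bookkeeping exactly right, together with the accompanying downward-closedness argument, is the only delicate part, the rest being the routine reductions above.
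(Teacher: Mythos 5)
Your proof is correct and is essentially the paper's own argument: in both, the crux is applying Theorem \ref{1.11}b) to the finite quotient $R/(x)$, where finiteness forces the supremum in (\ref{1.11EQ}) to be attained, so that the value $\varphi_R(x)$ cannot be a limit ordinal. The only difference is organizational: the paper argues by contradiction, picking $b$ with $\varphi_R(b)=\omega$ and showing the quotient function on the finite ring $R/(b)$ would force $\varphi_R(b)<\omega$, whereas you run the same mechanism forwards (every value on $R^{\bullet}$ is $0$ or a successor, then downward closedness of $\varphi_R(R^{\bullet})$ from Theorem \ref{BOTTOMTHM}c) gives $e(R)\leq\omega$), and you make explicit the reduction and the lower bound $e(R)\geq\omega$ that the paper leaves implicit.
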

\begin{proof}
If not, there is $b \in R$ with $\varphi_R(b) = \omega$.  Let $\overphi: R/(b) \ra \Ord$ be the quotient Euclidean function.  Write the elements of $R/(b)$ 
as $\overline{x_1} = 0, \overline{x_2},\ldots, \overline{x_n}$.  For all $i > 1$, $\overphi( \overline{x_i}) < \overphi( \overline{0}) \leq \omega$.  But 
$\overphi(\overline{0}) = \sup_{i > 1} \overphi( \overline{x_i}) + 1 < \omega$.  
Thus, there exists $b' \in (b)$ with $\varphi_R(b') < \omega$, contradicting 
the fact that $\varphi_R$ is isotone.
\end{proof}
\noindent

\section{Length Functions on Rings}

\subsection{The length function on a Noetherian ring}
\textbf{} \\ \\ \noindent
In this section we closely follow work of Gulliksen \cite{Gulliksen73} and Brookfield \cite{Brookfield02}.
\\ \\
Let $R$ be a ring, and let $\mathcal{I}(R)$ be the lattice of ideals of $R$.  
Then $\mathcal{I}(R)$ is Noetherian (resp. Artinian) iff $R$ is Noetherian 
(resp. Artinian).  Thus the dual lattice $\mathcal{I}^{\vee}(R)$ is Artinian (resp. Noetherian) iff $R$ is Noetherian (resp. Artinian).  \\ \indent
Henceforth we suppose $R$ is Noetherian, so $\mathcal{I}^{\vee}(R)$ 
is Artinian with top element $(0)$.  By the results of $\S 1$ there is a least isotone map $\lambda_R: \mathcal{I}^{\vee}(R) \ra \Ord$, 
the \textbf{length function} $\lambda_R$ of $R$, and we define the 
\textbf{length of R} as $\len(R) = \lambda_R((0))$.  
\\ \\
For any ideal $I$ of $R$, $R/I$ is Noetherian, so $\lambda_{R/I}$ and 
$\len(R/I)$ are well-defined.  If we denote the quotient map $R \ra R/I$ by 
$q$, then the usual pullback of ideals $q^*$ identifies $\mathcal{I}(R/I)$ 
with an ordered subset of $\mathcal{I}(R)$ and hence also $\mathcal{I}^{\vee}(R/I)$ with an ordered subset of $\mathcal{I}^{\vee}(R)$, 
and it is easy to see that under this identification we have 
\[ \lambda_R|_{\mathcal{I}^{\vee}(R/I)} = \lambda_{R/I}, \]
and thus also 
\[ \len(R/I) = \lambda_{R/I}((0))) = \lambda_R (q^*((0))) = \lambda_R(I). \]
\\ 
To ease notation, for $x \in R$ we put $\ell(x) = \lambda_R( (x))$.  

\begin{prop}
\label{3.1}
Let $\varphi$ be a Euclidean function on $R$.  For all $x \in R$, $\ell(x) \leq \varphi(x)$.  
\end{prop}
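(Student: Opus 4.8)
The plan is to realize both $\ell$ and the Euclidean function $\varphi$ as isotone maps on the single Artinian poset $\mathcal{I}^{\vee}(R)$ and then invoke the fact that $\lambda_R$ is the \emph{least} such map. First I would record that since $R$ is Euclidean it is principal (Proposition~\ref{1.2}b)), so every ideal has the form $(x)$ and $\mathcal{I}^{\vee}(R)$ is exactly the poset $\Prin R$ of principal ideals ordered by reverse inclusion. Hence $\ell(x) = \lambda_R((x))$ is the value at $(x)$ of the bottom element $\lambda_R \in \Iso(\mathcal{I}^{\vee}(R))$ furnished by Lemma~\ref{0.0.1} and Theorem~\ref{0.0.2}.

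Since $\varphi$ itself need not be isotone, I would next replace it by the isotone Euclidean function $\underphi \leq \varphi$ of Theorem~\ref{1.4}, given explicitly by $\underphi(x) = \min_{y \in (x)^{\bullet}} \varphi(y)$; it then suffices to prove $\ell(x) \leq \underphi(x)$. The point of this reduction is that the displayed formula makes $\underphi(x)$ depend only on the ideal $(x)$, so $\underphi$ descends to a well-defined map $\overline{\underphi}: \mathcal{I}^{\vee}(R) \ra \Ord$, where at the top element $(0)$ we use the extension-at-zero value $\underphi(0) = \sup_{x \in R^{\bullet}} \underphi(x) + 1$. I would then check that $\overline{\underphi}$ is isotone: for two nonzero principal ideals with $(x) \supsetneq (y)$ — i.e. $x$ strictly divides $y$ — this is precisely the isotonicity of $\underphi$ as a Euclidean function, while at the top element $(0)$ isotonicity is immediate since $\underphi(y) \leq \sup_{z} \underphi(z) < \underphi(0)$ for every $y \neq 0$.

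With $\overline{\underphi} \in \Iso(\mathcal{I}^{\vee}(R))$ established, the conclusion is forced by minimality: $\lambda_R$ is the bottom element of $\Iso(\mathcal{I}^{\vee}(R))$, so $\lambda_R \leq \overline{\underphi}$ pointwise, and evaluating at $(x)$ yields $\ell(x) = \lambda_R((x)) \leq \overline{\underphi}((x)) = \underphi(x) \leq \varphi(x)$ for every $x \in R$ — the case $x = 0$ being swept up automatically because $(0)$ is an honest element (the top) of $\mathcal{I}^{\vee}(R)$. The one step demanding genuine care, and the main obstacle, is exactly this upgrade of $\varphi$ to a legitimate isotone map on the \emph{entire} dual ideal lattice, including its value at the zero ideal; it is this global isotonicity, rather than a mere comparison on $R^{\bullet}$, that licenses the appeal to the leastness of $\lambda_R$.
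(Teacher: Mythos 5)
Your proposal is correct and takes essentially the same route as the paper: both arguments realize an isotone Euclidean function lying below $\varphi$ as an isotone map on the Artinian poset $\mathcal{I}^{\vee}(R)$ and then conclude by the minimality of the length function $\lambda_R$. The only difference is cosmetic --- the paper first reduces to the bottom Euclidean function $\varphi_R$ (isotone by Proposition \ref{BOTTOMISOPROP}), whereas you pass to $\underphi \leq \varphi$ directly via Theorem \ref{1.4}; if anything your version is slightly more self-contained and more careful about the top element $(0)$, which the paper glosses over when it identifies $\mathcal{I}^{\vee}(R)$ with $R^{\bullet}/R^{\times}$.
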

\begin{proof} Since $R$ admits a Euclidean function, it is a principal ring, 
and thus $\mathcal{I}^{\vee}(R) = R^{\bullet}/R^{\times}$.  We may assume 
that $\varphi = \varphi_R$ is the bottom Euclidean function on $R$.  Then 
both $\ell$ and $\varphi$ induce well-defined isotone functions on $R^{\bullet}/R^{\times}$.  But by definition $\ell = \lambda_R$ is the 
\emph{least} isotone function on $\mathcal{I}^{\vee}(R)$, so $\ell(x) \leq 
\varphi_R(x)$ for all $x \in R$.
\end{proof}
\noindent
If $R$ is a PID and $x \in R^{\bullet}$, then the ring $R/(x)$ is an Artinian ring and thus its length, which is equal to $\ell(x)$, is finite.  In particular, for all $x \in R^{\bullet}$ $\ell(x) < \omega$ and $\ell(0) = \omega$.  From this the next result follows directly.

\begin{prop}
\label{3.2}
Let $R$ be a PID which is not a field, and let $x \in R$.  \\
a) If $x \in R^{\times}$, then $\ell(x) = 0$.  \\
b) If $x \in R^{\bullet} \setminus R^{\times}$, we may write $x = \pi_1 \cdots 
\pi_n$ for not necessarily distinct prime elements $\pi_1,\ldots,\pi_n$, and 
then $\ell(x) = n$.  \\
c) We have $\ell(0) = \len(R) = \omega$.
\end{prop}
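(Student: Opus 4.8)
The plan is to compute directly with the length function $\lambda_R$ on the dual ideal lattice $\mathcal{I}^{\vee}(R)$, using the recursion one reads off from its transfinite construction in the proof of Theorem \ref{0.0.2}: for every ideal $I$, $\lambda_R(I) = \sup \{ \lambda_R(J) + 1 : J \in \mathcal{I}^{\vee}(R), \ J \supsetneq I \}$, with the convention $\sup \emptyset = 0$. Since $R$ is a PID, every ideal is principal, the bottom element of $\mathcal{I}^{\vee}(R)$ is $R = (1)$, the top element is $(0)$, and for fixed $x$ the ideals strictly containing $(x)$ are exactly the $(d)$ with $d$ a proper divisor of $x$. With this in hand part a) is immediate: if $x \in R^{\times}$ then $(x) = R$ has empty down-set, so $\ell(x) = \lambda_R(R) = 0$.

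For part b) I would induct on the number $n$ of prime factors of $x$ counted with multiplicity, the case $n = 0$ being part a). For the step, write $x = \pi_1 \cdots \pi_n$; the proper divisors of $x$ are the products of proper sub-multisets of $\{\pi_1, \ldots, \pi_n\}$, and by the strong induction hypothesis $\lambda_R((d))$ equals the number of prime factors of $d$. As $d$ runs over the proper divisors this number runs over $\{0, 1, \ldots, n-1\}$, so the recursion gives $\ell(x) = \lambda_R((x)) = \sup_d ( \lambda_R((d)) + 1 ) = n$.

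For part c) the top element $(0)$ has down-set equal to all $(y)$ with $y \in R^{\bullet}$, so the recursion reads $\ell(0) = \len(R) = \sup_{y \in R^{\bullet}} ( \ell(y) + 1 )$. Since $R$ is not a field it contains a prime $\pi$, and by b) the values $\ell(\pi^m) = m$ already exhaust $\N$; hence the supremum is $\sup_{m \in \N}(m + 1) = \omega$. The one point deserving care is that $(0)$ really is the supremum in $\mathcal{I}^{\vee}(R)$ of the chain $\{(\pi^m)\}_m$ --- equivalently that $\bigcap_m (\pi^m) = (0)$, which holds because $R$ is a domain --- so that $(0)$ is a genuine limit element receiving the value $\omega$ rather than a successor value.

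I expect no deep obstacle: the work is entirely in correctly transcribing the recursion for $\lambda_R$ from the construction of Theorem \ref{0.0.2} and, in part b), taking the supremum over \emph{all} proper divisors rather than merely the covering divisors, which is exactly what the full strong induction hypothesis licenses.
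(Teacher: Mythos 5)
Your argument is correct, but it takes a genuinely different route from the paper's. The paper does not compute anything inside $\mathcal{I}^{\vee}(R)$ directly: it has already established that length functions are compatible with quotients, i.e. $\lambda_R|_{\mathcal{I}^{\vee}(R/I)} = \lambda_{R/I}$, hence $\ell(x) = \lambda_R((x)) = \len(R/(x))$, and it then invokes the fact that for $x \in R^{\bullet}$ the ring $R/(x)$ is Artinian (and Noetherian), hence of finite length, the length being $n$ when $x = \pi_1 \cdots \pi_n$ via the chain $(x) \subsetneq (\pi_1 \cdots \pi_{n-1}) \subsetneq \cdots \subsetneq (1)$; the proposition is then read off, with $\ell(0) = \len(R) = \omega$ because $(0)$ lies above ideals of every finite value. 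You instead stay entirely in the divisor lattice of $R$ and evaluate the bottom element of $\Iso(\mathcal{I}^{\vee}(R))$ from the recursion $\lambda_R(I) = \sup \{ \lambda_R(J) + 1 : J \supsetneq I \}$ by strong induction on the number of prime factors. What your route buys is self-containedness: no quotient identification and no Jordan--H\"older theory, just a rank computation. What the paper's route buys is economy and reuse: the identification $\ell(x) = \len(R/(x))$ is set up in Section 3 anyway and is exactly what powers Proposition \ref{3.1} and the notion of an $\ell$-Euclidean ring. One point in your write-up to tighten: the recursion is not literally ``read off'' from the construction in Theorem \ref{0.0.2}, whose transfinite process assigns the value $\alpha$ to suprema of already-valued subsets; what you are really using is the standard characterization of the least isotone function on an Artinian ordered set, and this deserves its own one-line justification by well-founded induction (the function defined by the recursion is isotone, and for any isotone $f$ one has $\lambda_R(I) \leq f(I)$ inductively, since $\lambda_R(J) + 1 \leq f(J) + 1 \leq f(I)$ for every $J \supsetneq I$). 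Granting that, parts a), b), c) are all sound; your closing worry about $(0)$ being the supremum of the chain $\{(\pi^m)\}_m$ is harmless but unnecessary, since the recursion applies to every element of an Artinian poset whether or not it is a limit of smaller elements.
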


\subsection{$\ell$-Euclidean rings}
\textbf{} \\ \\
A ring $R$ is $\ell$-\textbf{Euclidean} if the function $x \in R \mapsto 
\ell(x) \in \Ord^{\bullet}$ is a Euclidean function on $R$.  The point is that if $\ell$ is a Euclidean function on $R$, it is then 
the least Euclidean function on $R$, so that $e(R) = \len(R)$.  
\\ \\
Example 3.1: a) The ring $\Z$ is \emph{not} $\ell$-Euclidean. \\
b) For a field $k$, the ring $k[t]$ is $\ell$-Euclidean iff $k$ is algebraically closed.  
\\ \\
Example 3.2: A \textbf{norm} on a nonzero ring $R$ is a function 
$|\cdot| R \ra \N$ such that $|x| = 0 \iff x = 0$, $|x| = 1 \iff x \in R^{\times}$ and $|xy| = |x| |y|$ for all $x,y \in R$.  A ring admitting a norm is necessarily a domain: if $x,y \in R^{\bullet}$, 
$|xy| = |x| |y| \neq 0$, so $xy \in R^{\bullet}$.  An $\ell$-Euclidean domain admits a Euclidean norm: $x \in R^{\bullet} \mapsto 2^x$.

\begin{prop}
\label{3.5}
A localization of an $\ell$-Euclidean domain is $\ell$-Euclidean.
\end{prop}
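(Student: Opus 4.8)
The plan is to show directly that the length function on $T := S^{-1}R$ is a Euclidean function on $T$; since a length function is automatically the least among all Euclidean functions (Proposition \ref{3.1}), this is exactly what it means for $T$ to be $\ell$-Euclidean. First I record the structure I will use. Being a localization of a Euclidean domain, $T$ is Euclidean by Theorem \ref{9.9}, and it is a PID (ideals of $S^{-1}R$ are extended, hence principal). We may assume $T$ is not a field, as otherwise every nonzero element is a unit and the division property is trivial. By Proposition \ref{3.2}, writing $\ell_R$ and $\ell_{S^{-1}R}$ for the two length functions, each counts prime factors with multiplicity. The primes of $T$ are precisely the primes $\pi$ of $R$ with $(\pi) \cap S = \emptyset$ (the ``surviving'' primes); every other prime of $R$ divides some element of $S$ and so becomes a unit in $T$ (the ``inverted'' primes). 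Thus for $x \in R^{\bullet}$ factored as $x = u\,x'x_S$ with $x'$ the product of the surviving prime powers and $x_S$ the product of the inverted ones, we have $(x) = (x')$ in $T$ and
\[ \ell_{S^{-1}R}(x/1) = \ell_R(x') \leq \ell_R(x). \]

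Next I reduce to divisors lying in the image of $R$. Since each $s \in S$ is a unit in $T$, every element of $T$ is a unit multiple of (the image of) an element of $R$; and since $\ell_{S^{-1}R}(y)$ depends only on $(y)$, the Euclidean division condition is unchanged when $a,b$ are replaced by associates. Hence it suffices to find, for $\alpha \in R$ and $\beta \in R^{\bullet}$, elements $q,r \in T$ with $\alpha/1 = q\,(\beta/1) + r$ and either $r = 0$ or $\ell_{S^{-1}R}(r) < \ell_{S^{-1}R}(\beta/1)$.

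The key idea — the one step needing care — is to divide not by $\beta$ but by its surviving part $\beta'$. Writing $\beta = u\,\beta'\beta_S$ as above, the factors $u$ and $\beta_S$ are units in $T$, so $(\beta) = (\beta')$ in $T$ and $\ell_{S^{-1}R}(\beta/1) = \ell_R(\beta')$. Now I invoke the hypothesis that $R$ is $\ell$-Euclidean: dividing $\alpha$ by $\beta'$ in $R$ gives $q_0,r_0 \in R$ with $\alpha = q_0\beta' + r_0$ and either $r_0 = 0$ or $\ell_R(r_0) < \ell_R(\beta')$. Transporting this to $T$ and absorbing the unit $u\beta_S$ into the quotient, I set $q := (q_0/1)(u\beta_S)^{-1}$ and $r := r_0/1$, obtaining $\alpha/1 = q\,(\beta/1) + r$; and when $r_0 \neq 0$,
\[ \ell_{S^{-1}R}(r) \leq \ell_R(r_0) < \ell_R(\beta') = \ell_{S^{-1}R}(\beta/1). \]

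The obstacle this circumvents is the failure of the naive approach: if one divides by $\beta$ itself and pushes the $R$-division into $T$, the inequality $\ell_R(r_0) < \ell_R(\beta)$ need not survive localization, because passing to $T$ can shrink $\ell_R(\beta)$ (when $\beta$ has inverted prime factors) strictly more than it shrinks $\ell_R(r_0)$. Dividing by the surviving part $\beta'$ repairs this asymmetry: on the divisor side $\ell_{S^{-1}R}(\beta/1)$ equals $\ell_R(\beta')$ with no loss, while on the remainder side localization can only help, giving $\ell_{S^{-1}R}(r) \leq \ell_R(r_0)$. This shows $\ell_{S^{-1}R}$ is Euclidean, and therefore $T = S^{-1}R$ is $\ell$-Euclidean.
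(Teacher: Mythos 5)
Your proof is correct. The paper itself offers no argument here (the proof is ``Left to the reader''), so there is nothing to compare against line by line; but your write-up supplies exactly the details that need supplying, and it correctly isolates the one genuine subtlety: if one divides $\alpha$ by $\beta$ itself in $R$ and pushes the division into $T = S^{-1}R$, the divisor's length can drop under localization (its inverted prime factors become units) while the remainder's length need not drop as much, so the strict inequality $\ell_R(r_0) < \ell_R(\beta)$ does not transfer. Dividing instead by the surviving part $\beta'$, for which $\ell_{S^{-1}R}(\beta/1) = \ell_R(\beta')$ exactly, repairs this, and the remaining steps (reduction to dividends and divisors in the image of $R$ via unit-invariance of the division condition, and the inequality $\ell_{S^{-1}R}(x/1) \le \ell_R(x)$) are all sound given that localizations of PIDs are PIDs and that surviving primes stay prime. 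It is worth noting an alternative, slightly slicker route using the paper's own machinery: the localized Euclidean function of Motzkin--Samuel underlying Theorem \ref{9.9} assigns to $y \in S^{-1}R$ the minimum of $\varphi(x)$ over $x \in R^{\bullet}$ generating the same ideal as $y$ in $S^{-1}R$; applied to $\varphi = \ell_R$ this minimum is attained at the surviving part and equals $\ell_{S^{-1}R}(y)$, so $\ell_{S^{-1}R}$ is Euclidean by that theorem directly. Your direct verification proves the same thing without invoking the black box, at the cost of redoing the division argument; either is acceptable, and the two are at bottom the same computation.
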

\begin{proof} Left to the reader.
\end{proof}
\noindent
Proposition \ref{3.5} furnishes further examples of 
$\ell$-Euclidean domains, namely any ring between $\overline{k}[t]$ and 
its fraction field.  One wonders about further examples.

\begin{ques}
Is there a classification of $\ell$-Euclidean domains?
\end{ques}



\end{document}